\newtheorem{thm}{Theorem}
\newtheorem{prop}{Proposition}
\newtheorem{cor}{Corollary}
\newtheorem{rem}{Remark}
\newcommand{\qed}{\hfill \fbox{}}
\newcommand{\beeq}{\begin{equation}}
\newcommand{\eeeqnar}{\end{eqnarray*}}
\newcommand{\beeqnar}{\begin{eqnarray*}}
\newcommand{\eeeq}{\end{equation}}
\newcommand{\beit}{\begin{itemize}}
\newcommand{\eeit}{\end{itemize}}
\newcommand{\bedes}{\begin{description}}
\newcommand{\eedes}{\end{description}}
\newcommand{\been}{\begin{enumerate}}
\newcommand{\eeen}{\end{enumerate}}
\def\ddu2{{\frac{\partial^2}{\partial u^2}}}
\def\Re{{\mathrm{Re}}}
\def\ZZ {{\mathbb Z}}
\def\CC {{\mathbb C}}
\def\PP {{\mathbb P}}
\newenvironment{proof}{\medskip
\noindent{\bf Proof: }}{{\hfill$\square$}{\medskip}}
\begin{document}

\title{Evaluating the Mahler measure of linear forms via the Kronecker limit
formula on complex projective space}
\author{James Cogdell \and Jay Jorgenson
\footnote{The second named author acknowledges grant support from several PSC-CUNY Awards, which are jointly funded by the Professional Staff Congress and The City University of New York.}\and Lejla Smajlovi\'{c}}
\maketitle

\begin{abstract} \noindent
In Cogdell et al., \it LMS Lecture Notes Series \bf 459, \rm 393--427 (2020), \rm the authors proved an
analogue of Kronecker's limit formula associated to any divisor $\mathcal D$ which is smooth in codimension one
on any smooth K\"ahler manifold $X$.  In the present article, we apply the aforementioned Kronecker limit
formula in the case when $X$ is complex projective space $\CC\PP^n$ for $n \geq 2$ and $\mathcal D$ is a
hyperplane, meaning the divisor of a linear form $P_D({z})$
for ${z} = (\mathcal{Z}_{j}) \in \CC\PP^n$.  Our main result is an explicit
evaluation of the Mahler measure of $P_{D}$ as a convergent series whose each term is given
in terms of rational numbers, multinomial coefficients, and the $L^{2}$-norm of the vector
of coefficients of $P_{D}$.
\end{abstract}

\section{Introduction}

\subsection{Mahler measure}
Let $P(x_{1},\cdots, x_{n}) \in \CC[x_{1},\cdots, x_{n}]$ be a polynomial in $n$ variables with
complex coefficients;  we assume that $P$ is not identically equal to zero.
The Mahler measure $M(P)$ of $P$ is defined by the expression
\begin{equation}\label{Mahler_definition}
M(P)=\exp \left({\frac  {1}{(2\pi )^{n}}}\int\limits_{0}^{{2\pi }}\int\limits_{0}^{{2\pi }}\cdots \int\limits_{0}^{{2\pi }}\log {\Bigl (}{\bigl |}
P(e^{{i\theta _{1}}},e^{{i\theta _{2}}},\ldots ,e^{{i\theta _{n}}}){\bigr |}{\Bigr )}\,d\theta _{1}\,d\theta _{2}\cdots d\theta _{n}\right).
\end{equation}
If $n=1$, we can write $P(x) = a_{d}x^{d} + \cdots + a_{1} x + a_{0} = a_{d} \prod\limits_{k=1}^{d}(x - \alpha_{k})$, in which case Jensen's formula implies that
\begin{equation}\label{Jensen_formula}
M(P) = \vert a_{d} \vert \prod\limits_{\vert \alpha_{j}\vert > 1} \vert \alpha_{j}\vert.
\end{equation}
As usual, one sets $m(P) = \log M(P)$ to denote the logarithmic Mahler measure of $P$.

Amongst the many articles involving Mahler measures, we shall highlight a few which we find particularly motivating.
In \cite{Sm08} the author presents an excellent survey of the many ways in which the Mahler measure of polynomials in
one variables are related to various questions in mathematics, including problems in algebraic number theory, ergodic theory,
knot theory, transcendental number theory and diophantine approximation, just to name a few.  In \cite{De97} the author
established a fascinating connection between Mahler measures and Deligne periods associated to mixed motives; see \cite{De09}
and \cite{De12} for subsequent development of the insight from \cite{De97}.  In \cite{Bo98}, the author undertakes a study
of numerical methods by which one can estimate Mahler measures and, as a result, is able to investigate some of the ideas
from \cite{De97}.  Since then, many authors have extend the study of Mahler measures both in the numerical direction as in
\cite{Bo98} and in the theoretical framework as in \cite{De97}.

On page 22 of \cite{BG06} the authors stated the definition of Mahler measure \eqref{Mahler_definition}
in the context of heights of polynomials, though subsequent discussion only considers the setting of one variable polynomials.
In \cite{Ma00} the author computed the arithmetic height, in the sense of Arakelov theory, of divisors in projective space.
Specifically, it was shown that a hypersurface defined over $\ZZ$ has canonical height which was expressed by the Mahler
measure of a defining polynomials; see page 107 of \cite{Ma00}.

The following observation is an underlying aspect of a considerable part of the aforementioned work:  In many instances,
Mahler measures can be expressed as special number, such as norms of algebraic numbers, arithmetic heights or special values of $L$-functions.
As such, the study of Mahler measures is intrinsically interesting.

\subsection{Mahler measure of a linear polynomial}

For this article, we will consider the specific setting of linear polynomials,
which itself has been the focus of attention; see, for example, \cite{R-VTV04} or \cite{Sm81}.
Let
\begin{equation} \label{defn:lin. polyn}
P_D(\mathcal{Z}_0,\ldots,\mathcal{Z}_n)=\mathcal{W}_0\mathcal{Z}_{0}+\mathcal{W}_1\mathcal{Z}_1+...+\mathcal{W}_n\mathcal{Z}_n
\end{equation}
denote the linear polynomial in $n$ complex projective coordinates variables, and
assume for now that $n \geq 2$.  We will parameterize the polynomial $P_D$ through the $(n+1)$-tuple
$$
D=(\mathcal{W}_0, \mathcal{W}_1,\ldots,\mathcal{W}_n)
$$
of its coefficients.  Of course, we assume that some $\mathcal{W}_{j}$ is not zero, and we set
$$
\Vert D \Vert^{2} = |\mathcal{W}_{0}|^{2} + \cdots +|\mathcal{W}_{n}|^{2}.
$$

Assuming that $\mathcal{W}_0 \neq 0$, one has that after dehomogenization the (logarithmic) Mahler measure $m(P_D)$ can be evaluated as
\begin{equation}\label{Mahler_definition1}
m(P_D)={\frac  {1}{(2\pi )^{n}}}\int\limits_{0}^{{2\pi }}\int\limits_{0}^{{2\pi }}\cdots \int\limits_{0}^{{2\pi }}\log {\Bigl (}{\bigl |}
P_D(1,e^{{i\theta _{1}}},e^{{i\theta _{2}}},\ldots ,e^{{i\theta _{n}}}){\bigr |}{\Bigr )}\,d\theta _{1}\,d\theta _{2}\cdots d\theta _{n}.
\end{equation}
In  \cite{R-VTV04}, the authors derived the bounds
\begin{equation}\label{degree_one_bounds}
\log||D||-\frac{1}{2}\gamma-2\leq m(P_D)\leq\log\Vert D\Vert,
\end{equation}
where $\gamma$ denotes Euler's constant. The upper bound in (\ref{degree_one_bounds}) is trivial; however, the lower bound follows from
reasonably extensive computations stemming from an infinite series expansion of the Mahler measure
in terms of certain weighted integrals of $J$-Bessel functions.  Indeed, one of the points made in \cite{R-VTV04} is
that their results are amenable to numerical estimation of $m(P_{D})$ for any linear polynomial $P_{D}$;
see, in particular, Corollary 1.4 on page 476 of \cite{R-VTV04}.

In \cite{Sm81} it is shown that for certain classes of linear polynomials in $n+1$ variables one can evaluate the
corresponding Mahler measure.  Some of the main results of \cite{Sm81} follow from clever applications of
Jensen's formula, thus the resulting formulas are similar to \eqref{Jensen_formula}.

To specialize further, let us now assume that for each $j$ we have that $\mathcal{W}_{j}=1$.  In \cite{BSWZ12} it
is shown that $m(P_D)= \frac{d}{ds}W_{n+1}(s) \Big|_{s=0}$ where
$$
W_{n+1}(s)=\int\limits_{0}^1 \cdots \int\limits_{0}^1 \left| \sum_{k=1}^{n+1}e^{2\pi i t_k}\right|^s dt_1\cdots dt_{n+1}.
$$
When studying the Mahler measure of $P_{D}$ the authors of \cite{BSWZ12} employed arguments from probability theory to $W_{n+1}(s)$, which they viewed as the $s$-th moment of an $(n+1)-$step random walk.  In doing so,
it is asserted on page 982 of \cite{BSWZ12} that
\begin{equation}\label{all_d_equal_one}
m(P_{D}) =\log(n+1)-\sum_{j=1}^{\infty}\frac{1}{2j}\sum_{k=0}^j\binom{j}{k} \frac{(-1)^k}{(n+1)^{2k}} W_{n+1}(2k)
\,\,\,\,\,
\textrm{\rm when $D=(1,1,\cdots,1)$.}
\end{equation}
As it turns out, equation \eqref{all_d_equal_one} is a special case of our Theorem \ref{thm:main} as stated below.

Finally, let us note that in certain special instances the
values of the Mahler measure of a linear polynomial $P_D$, have been computed explicitly.  When
$n=2$ and $D=(1,1,1)$, it is proved in \cite{Sm81} that
$$
m(1+z_1+z_2)=\frac{3\sqrt{3}}{4\pi}L(2,\chi_3)
$$
where $L(s,\chi_3)$ is the Dirichlet $L-$function associated to the non-principal odd character $\chi_3$ modulo $3$.  If $n=3$,
then it also is proved in \cite{Sm81} that
$$
m(1+z_1+z_2+z_3) =\frac{7}{2\pi^2}\zeta(3)
$$
where $\zeta(s)$ denotes the Riemann zeta function.  There are many other examples
of explicit evaluations of Mahler measures, far too many to provide an exhaustive listing.
However, it should be noted that each new evaluation is in and of itself interesting and
aids in the understanding the importance of Mahler measures.

\subsection{Our main results}

The purpose of this article is to develop a different means to evaluate Mahler measures of linear polynomials in $n+1$ complex variables.
Our approach is based on the following observation. A holomorphic section of a power of the canonical bundle on $n$-dimensional complex projective
space $\CC\PP^n$ can be realized as a homogeneous polynomial in $n+1$ projective coordinates.  Therefore, the log-norm of such a polynomial, which appears in the definition of
the Mahler measure, can be expressed in terms of the log-norm of a holomorphic form on $\CC\PP^n$ which, from the results of \cite{CJS20}, are related
to an integral over its divisor of a ``truncated'' Green's function, or resolvent kernel, on $\CC\PP^n$ by way of its Kronecker limit formula.  The
spectral expansion of Green's function on $\CC\PP^n$ yields a representation of the log-norm of the polynomial in terms of a certain infinite series which we are able to
explicitly evaluate.

Our first main result is the following theorem.

\begin{thm}\label{thm:main} With the notation as above, let $c(D)^2=(n+1)||D||^2$ and set
$$
a(n,k,D) = \sum_{\ell_0+\ldots+\ell_n=k, \ell_m\geq 0} \binom{k}{\ell_0,\ell_1,\ldots,\ell_n}^2|\mathcal{W}_0|^{2\ell_0}\cdots|\mathcal{W}_n|^{2\ell_n}
$$
where
$$
\binom{k}{\ell_0,\ell_1,\ldots,\ell_n}=\frac{k!}{\ell_0!\ell_1!\cdots\ell_n!}.
$$
is the multinomial coefficient. Then for $n \geq 3$ the logarithmic Mahler measure $m(P_D)$ of the linear polynomial $P_D$
is given by
\begin{equation}\label{main expression}
m(P_D)= \log c(D) -\frac{1}{2}
\sum_{j=1}^{\infty}\frac{1}{j}\sum_{k=0}^j\binom{j}{k} \frac{(-1)^k a(n,k,D)}{c(D)^{2k}}.
\end{equation}
\end{thm}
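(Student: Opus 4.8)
The plan is to bypass the Kronecker limit formula entirely and instead read the identity off from the torus moments of $|P_D|$. Write
\[
A(\theta)=P_D(1,e^{i\theta_1},\ldots,e^{i\theta_n})=\mathcal{W}_0+\sum_{j=1}^n\mathcal{W}_je^{i\theta_j},
\]
and denote by $I[f]=\frac{1}{(2\pi)^n}\int_0^{2\pi}\!\cdots\!\int_0^{2\pi}f\,d\theta_1\cdots d\theta_n$ the normalized average over the torus, so that $m(P_D)=I[\log|A|]$. The first step is to identify $a(n,k,D)$ with the $2k$-th moment $I[|A|^{2k}]$. Expanding $A^k$ and $\overline{A}^{\,k}$ by the multinomial theorem and integrating, every cross term carries a factor $e^{i\sum_{j\geq1}(\ell_j-m_j)\theta_j}$ which averages to zero unless $\ell_j=m_j$ for all $j\geq1$; together with $\sum\ell=\sum m=k$ this also forces $\ell_0=m_0$, leaving exactly
\[
I[|A|^{2k}]=\sum_{\ell_0+\cdots+\ell_n=k}\binom{k}{\ell_0,\ell_1,\ldots,\ell_n}^2|\mathcal{W}_0|^{2\ell_0}\cdots|\mathcal{W}_n|^{2\ell_n}=a(n,k,D).
\]

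Next I would expand the logarithm. Writing $2\log|A|=\log c(D)^2+\log\big(|A|^2/c(D)^2\big)$ and setting $u(\theta)=1-|A(\theta)|^2/c(D)^2$, we have $\log(1-u)=-\sum_{j\geq1}u^j/j$, while the binomial theorem applied to the \emph{finite} power $u^j$ gives, using the moment identity above,
\[
I[u^j]=\sum_{k=0}^j\binom{j}{k}(-1)^k\frac{I[|A|^{2k}]}{c(D)^{2k}}=\sum_{k=0}^j\binom{j}{k}\frac{(-1)^k a(n,k,D)}{c(D)^{2k}}.
\]
Thus, at least formally,
\[
2m(P_D)=\log c(D)^2+I[\log(1-u)]=2\log c(D)-\sum_{j=1}^\infty\frac{1}{j}I[u^j],
\]
which is precisely \eqref{main expression} after substituting $I[u^j]$ and dividing by $2$.

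The crux is to justify the termwise integration, and this is where the normalization $c(D)^2=(n+1)\|D\|^2$ does its work. Applying the Cauchy--Schwarz inequality to $(\mathcal{W}_0,\ldots,\mathcal{W}_n)$ against the unit-modulus vector $(1,e^{i\theta_1},\ldots,e^{i\theta_n})$ yields $|A(\theta)|^2\leq(n+1)\|D\|^2=c(D)^2$ for every $\theta$, hence $u(\theta)\in[0,1]$. Consequently each term $u^j/j$ is nonnegative, the partial sums increase pointwise to $-\log(1-u)\in[0,\infty]$, and the monotone convergence theorem legitimizes the interchange $I\big[\sum_j u^j/j\big]=\sum_j\frac1jI[u^j]$. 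Since $m(P_D)$ is finite for any nonzero polynomial, $I[-\log(1-u)]=2\log c(D)-2m(P_D)$ is finite, so the series in \eqref{main expression} converges; the sharp bound $|A|^2\leq c(D)^2$ is therefore the single point on which everything turns, and it is the step I expect to be the real (if modest) obstacle, all the rest being bookkeeping with multinomial coefficients.

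I would also note that this elementary derivation appears to use nothing beyond $n\geq1$, so the hypothesis $n\geq3$ in the statement is presumably imposed by the spectral route through the Kronecker limit formula rather than by the identity itself. As a consistency check, the case $D=(1,\ldots,1)$ recovers \eqref{all_d_equal_one}: setting every $\mathcal{W}_j=1$ gives $a(n,k,D)=W_{n+1}(2k)$ by the same moment computation (now over the full $(n+1)$-torus) and $c(D)^2=(n+1)^2$, so $\log c(D)=\log(n+1)$ and the two expressions coincide.
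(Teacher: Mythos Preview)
Your proof is correct and takes a genuinely more elementary route than the paper. The paper reaches the identity
\[
2m(P_D)=2\log c(D)-\sum_{j\ge1}\frac1j\int_S(1-\cos^2 r)^j\,\mu_S
\]
(its equation \eqref{mahler_hyper_formula}) only after a long detour: Kronecker limit formula on $\CC\PP^n$, spectral expansion of the Green's function, Radon transform over the hyperplane $\mathcal D$, the Jacobi-polynomial identity \eqref{eigenfunctions identity}, and finally the hypergeometric manipulation in Proposition~\ref{prop_Legendra_formulas}. You observe that on the torus $S$ one has $\rho(z)=\log(n+1)$ and $\cos^2 r=|A|^2/c(D)^2$ (up to the measure-preserving involution $\theta\mapsto-\theta$), so that the paper's \eqref{starting formula2} collapses to the tautology $\log|A|^2=\log c(D)^2+\log(|A|^2/c(D)^2)$; the Cauchy--Schwarz bound $|A|^2\le c(D)^2$ then puts $u\in[0,1]$ and monotone convergence does the rest. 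The moment computation $I[|A|^{2k}]=a(n,k,D)$ is identical in both approaches. What the paper's spectral route buys is the Jacobi-polynomial expansions of Theorem~\ref{thm:main2} and, crucially, the explicit tail estimate with the Bessel-integral constant $G(n,D)$ in Theorem~\ref{thm:series}; it is the finiteness of $G(n,D)$ (equation \eqref{G(n,D) defin}) that actually forces $n\ge3$, confirming your suspicion that this hypothesis is extraneous for the bare identity \eqref{main expression}, which your argument establishes for all $n\ge1$.
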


The sum over $j$ on the right-hand side of \eqref{main expression} is absolutely convergent.
However, it is not possible to view the series as a double series in $j$ and $k$ and then interchange the order of summation.
In particular, the series diverges when viewed as a sum over $j$ and for a fixed $k$.

We also obtain the following expression for $m(P_{D})$.

\begin{thm}\label{thm:main2}
For any integer $\ell \geq 1$, let $H_{\ell} = 1 + \frac{1}{2} \cdots + \frac{1}{\ell}$ and set
$$
S_{D}(\ell) = \sum_{j=1}^{\infty}\frac{2j+\ell}{j(j+\ell)}\sum_{k=0}^j\binom{j+\ell+k-1}{k}\binom{j}{k} \frac{(-1)^k a(n,k,D)}{c(D)^{2k}}.
$$
Then for any $n\geq 3$ and any $D$, we have that
\begin{equation}\label{measure formula 3}
m(P_D)= \log c(D) - \frac{1}{2}H_{1} -\frac{1}{2}S_{D}(1).
\end{equation}
Further, for any $n\geq 3$ and $\ell \geq 2$ we have that
\begin{equation}\label{measure formula 2}
m(P_D)= \log c(D) - \frac{1}{2}H_{\ell} -\frac{1}{2}S_{D}(\ell)
\end{equation}
provided $D \neq r(1,1,\cdots, 1)$ for some $r\neq 0$.
\end{thm}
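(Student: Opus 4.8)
The plan is to derive Theorem \ref{thm:main2} from Theorem \ref{thm:main} by recognizing both the series in \eqref{main expression} and the series $S_D(\ell)$ as two expansions of one and the same integral, and then reducing everything to a single Fourier--Jacobi expansion of a logarithm. The key preliminary step is an integral representation of the coefficients. Setting $q_m = |\mathcal{W}_m|/\|D\|$, so that $\sum_m q_m^2 = 1$, orthogonality of the characters $e^{i\mathbf{k}\cdot\theta}$ on the torus gives
\begin{equation*}
\frac{a(n,k,D)}{\|D\|^{2k}} = \frac{1}{(2\pi)^{n+1}}\int_{[0,2\pi]^{n+1}} \Bigl|\sum_{m=0}^n q_m e^{i\theta_m}\Bigr|^{2k}\, d\theta_0\cdots d\theta_n .
\end{equation*}
Introducing the random variable $R = \frac{1}{n+1}\bigl|\sum_m q_m e^{i\theta_m}\bigr|^2$, with $\theta_m$ independent and uniform, and letting $\nu$ denote its law, dividing by $(n+1)^k$ yields the Hausdorff moment identity $a(n,k,D)/c(D)^{2k} = \int_0^{R_{\max}} r^k\,d\nu(r)$, where $R_{\max} = (\sum_m q_m)^2/(n+1) \le 1$ with equality precisely when all $q_m$ are equal, i.e. when $D = r(1,\ldots,1)$.

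With this in hand I would first rewrite Theorem \ref{thm:main}. The inner sum is $A_j := \sum_{k=0}^j \binom{j}{k}(-1)^k\int r^k\,d\nu = \int_0^{R_{\max}} (1-r)^j\,d\nu(r)$, and since $\sum_{j\ge1}(1-r)^j/j = -\log r$, Tonelli's theorem (all terms being nonnegative) gives
\begin{equation*}
m(P_D) = \log c(D) - \frac12\sum_{j=1}^\infty \frac{A_j}{j} = \log c(D) + \frac12\int_0^{R_{\max}} \log r\,d\nu(r),
\end{equation*}
the finiteness of $\int \log r\,d\nu$ being exactly the convergence already underlying \eqref{main expression}. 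This also explains the remark after Theorem \ref{thm:main}: the $k=0$ part of $A_j$ is the constant $1$, whose $j$-sum diverges, whereas $A_j$ itself decays like $1/j$, so the assembled series converges.

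Next I would treat $S_D(\ell)$. Its inner sum is a terminating Gauss series, $\sum_{k=0}^j \binom{j+\ell+k-1}{k}\binom{j}{k}(-1)^k r^k = {}_2F_1(-j,j+\ell;1;r) = P_j^{(0,\ell-1)}(1-2r)$, so integrating against $\nu$ and using $\frac{2j+\ell}{j(j+\ell)} = \frac1j + \frac{1}{j+\ell}$ reduces the whole theorem to the pointwise identity, valid for $y\in(-1,1)$,
\begin{equation*}
-\log\frac{1-y}{2} = H_\ell + \sum_{j=1}^\infty \frac{2j+\ell}{j(j+\ell)}\,P_j^{(0,\ell-1)}(y).
\end{equation*}
I would prove this as the Fourier--Jacobi expansion in the $P_j^{(0,\ell-1)}$, which are orthogonal on $[-1,1]$ for the weight $(1+y)^{\ell-1}$ with $\int_{-1}^1 [P_j^{(0,\ell-1)}]^2(1+y)^{\ell-1}\,dy = 2^\ell/(2j+\ell)$. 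Applying Rodrigues' formula and integrating by parts $j$ times collapses the numerator through $\tfrac{d^j}{dy^j}\log(1-y) = -(j-1)!/(1-y)^j$, leaving $\tfrac{(j-1)!}{2^j j!}\int_{-1}^1 (1+y)^{j+\ell-1}\,dy = 2^\ell/(j(j+\ell))$; dividing by the norm gives the coefficient $\tfrac{2j+\ell}{j(j+\ell)}$, while the $j=0$ coefficient is $-2^\ell\!\int_0^1 v^{\ell-1}\log(1-v)\,dv \big/ (2^\ell/\ell) = H_\ell$. Putting $y = 1-2r$ and integrating against $\nu$ matches $-\int\log r\,d\nu = H_\ell + S_D(\ell)$, which together with the rewritten Theorem \ref{thm:main} yields \eqref{measure formula 3} and \eqref{measure formula 2}.

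The main obstacle is justifying the term-by-term integration of the signed Jacobi series against $\nu$, in contrast to the clean nonnegative interchange of Step A: one must dominate the partial sums $\sum_{j\le N}\tfrac{2j+\ell}{j(j+\ell)}P_j^{(0,\ell-1)}(1-2r)$ by an $L^1(\nu)$ function. The Fourier--Jacobi partial sums are well behaved on compact subsets of $(-1,1)$, and the only endpoint met by $\mathrm{supp}\,\nu$ in general is $y=1$ (i.e. $r\to0$), which is $\ell$-independent and already controlled through Theorem \ref{thm:main}. The difficulty is the other endpoint $y=-1$, where $P_j^{(0,\ell-1)}(-1) = (-1)^j\binom{j+\ell-1}{j}$ grows like $j^{\ell-1}$: for $\ell=1$ these stay bounded, so the estimate is robust for every $D$ and \eqref{measure formula 3} holds unconditionally, whereas for $\ell\ge2$ the growth destroys the domination precisely when $\mathrm{supp}\,\nu$ reaches $y=-1$. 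By the computation of $R_{\max}$ above, this happens exactly for the maximally symmetric divisor with all $|\mathcal{W}_m|$ equal, that is $D = r(1,\ldots,1)$ — the case instead covered by \eqref{all_d_equal_one}. Pinning down the requisite decay of $\int P_j^{(0,\ell-1)}(1-2r)\,d\nu$ from the support edge at $r=R_{\max}$, and confirming that this is the sole exceptional configuration, is where the genuine work lies.
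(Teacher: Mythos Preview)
Your approach is correct in outline and genuinely different from the paper's. The paper does \emph{not} derive Theorem~\ref{thm:main2} from Theorem~\ref{thm:main}; it returns to the pointwise Jacobi expansion \eqref{starting identity1} of $\log|P_D(z)|$, which was obtained from the Kronecker limit formula together with Proposition~\ref{prop_Legendra_formulas} (a telescoping reduction via Gradshteyn--Ryzhik 8.961.8 and a Legendre-function identity), integrates it over the torus $S$, and bounds the tail using the Bernstein--Haagerup--Schlichtkrull inequalities \eqref{poly_bound1}--\eqref{poly_bound2} combined with the change-of-variables formula \eqref{change_of_variables} and the Bessel integral $G(n,D)$. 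You instead start from Theorem~\ref{thm:main}, encode $a(n,k,D)/c(D)^{2k}$ as the $k$-th moment of a measure $\nu$ on $[0,R_{\max}]$, and recover the same Jacobi expansion as the Fourier--Jacobi series of $-\log\tfrac{1-y}{2}$ in the system $P_j^{(0,\ell-1)}$, computed directly via Rodrigues' formula. Your route is more elementary (it bypasses the heat-kernel machinery and the special-function identities of Proposition~\ref{prop_Legendra_formulas}) and makes the role of the exceptional point $D=r(1,\ldots,1)$ transparent via $R_{\max}=1$; the paper's route is more uniform in that both main theorems flow from the single identity \eqref{starting identity1}, and it yields the explicit error bounds of Theorem~\ref{thm:series} as a by-product.

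There is one soft spot. Your assertion that the endpoint $y=1$ (i.e.\ $r\to 0$) is ``already controlled through Theorem~\ref{thm:main}'' is not quite enough. Theorem~\ref{thm:main} gives $\int|\log r|\,d\nu<\infty$, but to dominate the Jacobi series termwise you need the Bernstein-type bound $|P_j^{(0,\ell-1)}(1-2r)|\le C\,r^{-1/4}(1-r)^{-1/4}/\sqrt{2j+\ell}$ (this is \eqref{poly_bound1} or \eqref{poly_bound2} after the symmetry $P_j^{(0,\ell-1)}(y)=(-1)^jP_j^{(\ell-1,0)}(-y)$), and hence you need $\int r^{-1/4}(1-r)^{-1/4}\,d\nu<\infty$, which is strictly stronger than log-integrability. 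This is precisely where the hypothesis $n\ge 3$ enters: it guarantees that the density $f_D$ of $\sqrt{c(D)^2 R}$ has enough Bessel decay at infinity to make the integral $H(n,D)$ in \eqref{Upper_bound_integral} (equivalently $G(n,D)$ in \eqref{G(n,D) defin}) finite. Without invoking something equivalent to this density estimate, your interchange of sum and $\nu$-integral is not justified, and the assumption $n\ge 3$ has not been used anywhere in your argument. Once you plug this in, the rest of your outline closes exactly as you describe.
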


From \eqref{measure formula 3},  we will prove that for all $n\geq 3$ and all $D$ one has that
\begin{equation}\label{measure formula 4}
m(P_D)=\log c(D) -\frac{1}{2} S_{D}(0).
\end{equation}
In summary, we will prove that \eqref{measure formula 2}
holds in the following cases:
\begin{enumerate}
\item[(i)] All $n \geq 3$ and all $D$ when $\ell=0$;
\item[(ii)] All $n \geq 3$ and all $D$ when $\ell=1$;
\item[(iii)] All $n \geq 3$ and all $\ell\geq 2$ provided $D \neq r(1,1,\cdots, 1)$ for some $r\neq 0$.
\end{enumerate}
In our concluding comments to this paper, we will discuss the exceptional instance in case (iii) as
well as the general setting when $n=2$.

As stated, \cite{Sm81} obtains explicit evaluations of Mahler measures for certain linear polynomials.  Thus,
by combining our main theorem with the formulas from \cite{Sm81}, we obtain many intriguing identities.  Along this
line, let us point out the following ``amusing'' corollary which comes from comparing our results to
those from \cite{R-VTV04}.

\begin{cor} \label{cor. comparison}
For any non-zero vector $D=(\mathcal{W}_0,\mathcal{W}_1,\ldots,\mathcal{W}_n)\in\CC^{n+1}$ one has that
\begin{equation}\label{comparison equation}
\sum_{j=1}^{\infty}\frac{1}{j}\sum_{k=0}^j\binom{j}{k} \frac{(-1)^k a(n,k,D)}{||D||^{2k}}\left(\frac{1}{(n+1)^k} - \frac{1}{k!}\right)=\log(n+1)+\gamma,
\end{equation}
where $\gamma$ denotes the Euler constant.
\end{cor}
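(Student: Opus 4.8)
The plan is to read the left-hand side of \eqref{comparison equation} as the difference of two expansions of the \emph{same} Mahler measure $m(P_D)$; once this is done, $m(P_D)$ and the entire dependence on $D$ will cancel, leaving only the asserted constant. Using $c(D)^2 = (n+1)\Vert D\Vert^2$ one has $\frac{1}{c(D)^{2k}} - \frac{1}{\Vert D\Vert^{2k}k!} = \frac{1}{\Vert D\Vert^{2k}}\bigl(\frac{1}{(n+1)^k} - \frac{1}{k!}\bigr)$, so the left-hand side of \eqref{comparison equation} equals $\Sigma_1 - \Sigma_2$, where
$$
\Sigma_1 = \sum_{j=1}^\infty \frac{1}{j}\sum_{k=0}^j \binom{j}{k}\frac{(-1)^k a(n,k,D)}{c(D)^{2k}}, \qquad \Sigma_2 = \sum_{j=1}^\infty \frac{1}{j}\sum_{k=0}^j \binom{j}{k}\frac{(-1)^k a(n,k,D)}{\Vert D\Vert^{2k}\,k!}.
$$
The series $\Sigma_1$ is precisely the one in Theorem \ref{thm:main}, so \eqref{main expression} gives $\Sigma_1 = 2\bigl(\log c(D) - m(P_D)\bigr)$ at once.

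The work lies in evaluating $\Sigma_2$, and here I would use the series expansion of $m(P_D)$ from \cite{R-VTV04}. The bridge is that $a(n,k,D)$ is the $2k$-th moment of $|P_D|$ over the torus: expanding $|P_D|^{2k} = \bigl(\sum_j \mathcal{W}_j e^{i\theta_j}\bigr)^k\bigl(\sum_j \overline{\mathcal{W}_j}\, e^{-i\theta_j}\bigr)^k$ by the multinomial theorem and integrating annihilates every off-diagonal term, leaving
$$
\frac{1}{(2\pi)^{n+1}}\int_0^{2\pi}\!\!\cdots\int_0^{2\pi} \bigl|P_D(e^{i\theta_0},\ldots,e^{i\theta_n})\bigr|^{2k}\,d\theta_0\cdots d\theta_n = a(n,k,D).
$$
Writing $Y = |P_D|^2/\Vert D\Vert^2$, the inner sum of $\Sigma_2$ becomes the torus average of $\sum_{k=0}^j \binom{j}{k}(-Y)^k/k! = L_j(Y)$, the $j$-th Laguerre polynomial. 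I would then apply the classical identity $\sum_{j=1}^\infty L_j(x)/j = -\gamma - \log x$ (obtained from the Laguerre generating function $\sum_{j\geq0}L_j(x)t^j = (1-t)^{-1}e^{-xt/(1-t)}$ together with the exponential-integral evaluation $\int_0^\infty\bigl((e^{-xs}-1)/s + 1/(1+s)\bigr)\,ds = -\gamma - \log x$) and average over the torus to get
$$
\Sigma_2 = -\gamma - \frac{1}{(2\pi)^{n+1}}\int \log\bigl(|P_D|^2/\Vert D\Vert^2\bigr)\,d\theta = 2\log\Vert D\Vert - 2m(P_D) - \gamma,
$$
which is exactly the expansion recorded in \cite{R-VTV04} and is compatible with the bounds \eqref{degree_one_bounds}.

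Subtracting the two evaluations then finishes the proof:
$$
\Sigma_1 - \Sigma_2 = \bigl(2\log c(D) - 2m(P_D)\bigr) - \bigl(2\log\Vert D\Vert - 2m(P_D) - \gamma\bigr) = 2\log\frac{c(D)}{\Vert D\Vert} + \gamma = \log(n+1) + \gamma,
$$
the final step because $c(D)/\Vert D\Vert = \sqrt{n+1}$. The telescoping of $m(P_D)$ and of all $D$-dependence is the amusing point of the statement.

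The main obstacle is the rigorous justification of $\Sigma_2$. First, one must confirm that the $J$-Bessel-function series of \cite{R-VTV04} agrees with the Laguerre/moment form used here; the two are linked through the standard identities expressing torus averages of $|P_D|^{2k}$ via Bessel functions. Second, and more delicate, since $\sum_j L_j(Y)/j$ converges only conditionally one must justify interchanging this summation with the torus integration. For this the boundedness $0 \le Y \le n+1$, which follows from the Cauchy--Schwarz inequality $|P_D|^2 \le (n+1)\Vert D\Vert^2$, controls the moments $a(n,k,D) \le (n+1)^k\Vert D\Vert^{2k}$, and a partial-summation (Abel) estimate on the tail of the Laguerre series, combined with the integrability of $\log|P_D|$, should secure the interchange.
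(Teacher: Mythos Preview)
Your proposal is correct and takes essentially the same approach as the paper: split the left-hand side as $\Sigma_1-\Sigma_2$, evaluate $\Sigma_1$ by Theorem~\ref{thm:main}, evaluate $\Sigma_2$ by the expansion of $m(P_D)$ from \cite{R-VTV04}, and subtract so that $m(P_D)$ cancels. The only difference is that the paper simply quotes equation~(4.7) of \cite{R-VTV04} (which is exactly your formula $\Sigma_2 = 2\log\Vert D\Vert - 2m(P_D) - \gamma$) and then subtracts, whereas you additionally sketch an independent derivation of that formula via the Laguerre identity $\sum_{j\ge 1}L_j(x)/j=-\gamma-\log x$; this extra work is not needed for the corollary but is a nice self-contained account of where the \cite{R-VTV04} series comes from.
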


\noindent
It is intriguing that the right-hand side of \eqref{comparison equation} is independent of $D$. When $n=0$, equation \eqref{comparison equation}
still makes sense and will follow from equation (2.4) of \cite{R-VTV04} after one would show that \eqref{comparison equation}  converges.
As such, it is possible that \eqref{comparison equation} could be proved directly, at least for some "small" values of $n$.

A further discussion
of additional identities is given in the concluding section of this article, see e.g. identity \eqref{comb identities2}.

In our proof of Theorem \ref{thm:main} and Theorem \ref{thm:main2} we obtain precise rates of convergence
of the infinite series involved.  Specifically, we obtain the following estimates.

\begin{thm}\label{thm:series} With notation as above, assume $n \geq 3$ and choose any $D\neq 0$.
Then there is an explicitly computable constant $G(n,D)$, which depends solely on $n$ and $D$, such that
for any $N \geq 1$ we have the bounds
\begin{equation}\label{upper bounds}
\left|m(P_D)-E_1(N;n,D) \right| \leq \frac{\Gamma(3/4)}{3}\frac{G(n,D)}{N^{3/4}} \quad and \quad
\left|m(P_D)-E_2(N;n,D) \right| \leq 2\sqrt[4]{2}\frac{G(n,D)}{\sqrt{N}},
\end{equation}
where
\begin{equation}\label{defn E_1}
E_1(N;n,D) = \log c(D) -\frac{1}{2}\sum_{j=1}^{N}\frac{1}{j}\sum_{k=0}^j\binom{j}{k} \frac{(-1)^k a(n,k,D)}{c(D)^{2k}}
\end{equation}
and
\begin{equation}\label{defn E_2}
E_2(N;n,D) = \log c(D) - \frac{1}{2} -\frac{1}{2}\sum_{j=1}^{N}\frac{2j+1}{j(j+1)}\sum_{k=0}^j\binom{j+k}{k}\binom{j}{k}
\frac{(-1)^k a(n,k,D)}{c(D)^{2k}}.
\end{equation}
\end{thm}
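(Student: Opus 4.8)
The plan is to recognize $E_1(N;n,D)$ and $E_2(N;n,D)$ as the $N$-th partial sums of the series appearing in Theorem \ref{thm:main} and in Theorem \ref{thm:main2} (with $\ell=1$), so that the two quantities to be bounded are precisely the tails
$$
m(P_D)-E_1(N;n,D)=-\frac12\sum_{j=N+1}^{\infty}\frac1j\,b_j,
\qquad
m(P_D)-E_2(N;n,D)=-\frac12\sum_{j=N+1}^{\infty}\frac{2j+1}{j(j+1)}\,d_j,
$$
where $b_j=\sum_{k=0}^{j}\binom jk\frac{(-1)^k a(n,k,D)}{c(D)^{2k}}$ and $d_j=\sum_{k=0}^{j}\binom{j+k}{k}\binom jk\frac{(-1)^k a(n,k,D)}{c(D)^{2k}}$ are the inner sums. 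Everything then reduces to quantitative decay estimates for $b_j$ and $d_j$ as $j\to\infty$, after which the bounds follow by comparing the resulting tail series to elementary integrals.

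First I would record an integral representation of the inner sums. Expanding $|P_D|^{2k}$ and integrating over the torus shows that $a(n,k,D)/c(D)^{2k}$ equals the normalized torus average $\ip{Y^k}$, where $Y=|P_D(e^{i\theta_0},\dots,e^{i\theta_n})|^2/c(D)^2$; since $c(D)^2=(n+1)\Vert D\Vert^2$ dominates $|P_D|^2$ pointwise by Cauchy--Schwarz, one has $Y\in[0,1]$. The binomial theorem then gives $b_j=\ip{(1-Y)^j}$, while the classical identity $\sum_{k=0}^{j}\binom{j+k}{k}\binom jk(-x)^k=P_j(1-2x)$ for the Legendre polynomials gives $d_j=\ip{P_j(1-2Y)}$. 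Both quantities are therefore governed by the behaviour of the distribution of $Y$ near $0$, that is, by how sharply $|P_D|$ is allowed to vanish on the torus.

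The analytic core is a single quantitative control of this distribution, which is what is packaged into the constant $G(n,D)$: a uniform bound on the torus measure of the sublevel sets $\{Y\le t\}$ (equivalently, on the Laplace transform $\ip{e^{-jY}}$). Granting a bound of the form $\mu\{Y\le t\}\le G(n,D)\,t^{3/4}$, the estimate for $b_j$ follows from $(1-Y)^j\le e^{-jY}$ together with $\int_0^{\infty}e^{-jy}y^{\beta}\,dy=\Gamma(\beta+1)j^{-\beta-1}$, which converts the sublevel bound into the decay $b_j\le \ip{e^{-jY}}=O\!\big(G(n,D)\,j^{-3/4}\big)$ and is the source of the factor $\Gamma(3/4)$. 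The estimate for $d_j$ instead uses the classical Laplace bound $|P_j(\cos\vartheta)|\le C\,j^{-1/2}(\sin\vartheta)^{-1/2}$; substituting $1-2Y=\cos\vartheta$ and integrating against the distribution of $Y$ (the resulting singularity at the origin being integrable once the sublevel bound above is in force) yields $|d_j|\le C\,G(n,D)\,j^{-1/2}$, with the numerical constant accounting for the factor $2\sqrt[4]{2}$.

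Finally I would sum the two tails. Inserting $b_j=O\!\big(G(n,D)\,j^{-3/4}\big)$ into the first tail produces $\sum_{j>N}j^{-7/4}$, and inserting $|d_j|=O\!\big(G(n,D)\,j^{-1/2}\big)$ together with $\frac{2j+1}{j(j+1)}\le\frac2j$ into the second produces $\sum_{j>N}j^{-3/2}$; bounding each by $\int_N^{\infty}x^{-s}\,dx$ gives the powers $N^{-3/4}$ and $N^{-1/2}$ and, after tracking the constants, the explicit prefactors $\frac{\Gamma(3/4)}{3}$ and $2\sqrt[4]{2}$. The main obstacle is unmistakably the uniform sublevel estimate with an explicitly computable $G(n,D)$: because $|P_D|$ vanishes on a real-codimension-two subset of the torus, one must estimate the measure of a tube around this zero set uniformly in the coefficient vector $D$, and the extremal configurations of $D$---where only a few coefficients are nonzero, or where all coefficients have equal modulus---are exactly where the power of $t$ in the sublevel bound, and hence the final exponent, is most delicate; this is the same sensitivity to the geometry of $D$ that the authors single out in their concluding remarks.
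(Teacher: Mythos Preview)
Your integral representations are correct and coincide with the paper's: with $Y=\vert P_D\vert^2/c(D)^2=\cos^2 r$ on the torus $S$, one has $b_j=\int_S(1-Y)^j\,\mu_S$ and $d_j=\int_S P_j(1-2Y)\,\mu_S$, and the two quantities $m(P_D)-E_i(N;n,D)$ are exactly the corresponding tails. Your appeal to Bernstein's inequality $\vert P_j(\cos\vartheta)\vert\le C(j\sin\vartheta)^{-1/2}$ for the Legendre tail is also what the paper uses.

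The gap is precisely at what you flag as ``the main obstacle'': you assert a sublevel bound $\mu\{Y\le t\}\le G(n,D)\,t^{3/4}$ but offer no mechanism to prove it with an \emph{explicitly computable} $G(n,D)$, and the exponent $3/4$ is reverse-engineered from the target rather than derived from the geometry of the zero set. The paper does \emph{not} argue via a sublevel estimate. Instead it invokes Kluyver's formula for the density of the $(n{+}1)$-step planar random walk $\sum r_m e^{i\theta_m}$ to obtain the change-of-variables identity
\[
\int_S h(Y)\,\mu_S
= c(D)^2\int_0^{d(D)/c(D)} h(v^2)\Bigl(\int_0^\infty vt\,J_0(c(D)vt)\prod_{m=0}^n J_0(r_mt)\,dt\Bigr)dv,
\]
and then separates variables via the pointwise Bessel bound $\vert J_0(c(D)vt)\vert\le v^{-1/2}\max\{1,\tfrac{\pi}{2}c(D)t\}^{-1/2}$. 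This manoeuvre is what \emph{defines}
\[
G(n,D)=c(D)^2\int_0^\infty t\,\bigl(\max\{1,\tfrac{\pi}{2}c(D)t\}\bigr)^{-1/2}\prod_{m=0}^n \vert J_0(r_mt)\vert\,dt,
\]
and the hypothesis $n\ge 3$ is exactly the convergence of this integral. After the separation, the $v$-integral for the first tail is the Beta integral $\int_0^1(1-v^2)^j v^{1/2}\,dv=\tfrac12 B(3/4,j{+}1)\le \tfrac{\Gamma(3/4)}{2}\,j^{-3/4}$ (no detour through $(1-Y)^j\le e^{-jY}$, which would alter the constant); for the second tail the residual $v$-integral $\int_0^1 v^{1/2}(1-v^2)^{-1/4}\vert J_0(c(D)vt)\vert\,dv$ is handled by Cauchy--Schwarz, producing the factor $\sqrt{\pi/2}$ that combines with the Bernstein constant $\tfrac{4}{\sqrt{\pi}\sqrt[4]{2}}$ to yield $2\sqrt[4]{2}$. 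None of these explicit constants, nor the explicit $G(n,D)$, are accessible from an abstract sublevel bound.
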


The bound we derive for $G(n,D)$ will be given in terms of the $J$-Bessel function, see formula \eqref{G(n,D) defin}.  However,
we note here that the bound for $G(n,D)$ is elementary.  In particular, Theorem \ref{thm:series}
leads to an explicit computational means by which one can estimate $m(P_{D})$ as accurately as one may wish.

We will derive explicit bounds for the tail of the series in \eqref{measure formula 2} for all $\ell \geq 2$ and
for all $D \neq r(1,1,\cdots, 1)$.  We refer the reader to Section 6 for the statements.  In the course of the proof
it becomes clear why this sole $D$ is singled out; it is the only instance where an application of the Cauchy-Schwarz
inequality is an equality rather than a strict inequality.

As is evident from equations \eqref{upper bounds} through \eqref{defn E_2} and the statements of our main theorems, the approximating
sum $E_1(N;n,D)$ is somewhat simpler and with a better rate of convergence to $m(P_D)$ than the sum $E_2(N;n,D)$. However, we find the estimate
to $m(P_D)$ by $E_2(N;n,D)$ to be theoretically interesting as well.  Indeed,  when combining the various expressions for
$m(P_D)$ derived above one has a potential source of combinatorial identities amongst weighted series of sums of binomial and multinomial coefficients.

Finally, we would like to emphasize that our main theorem is the first result of which we are aware which gives an explicit expression of the (logarithmic) Mahler measure of a
linear form in terms of an absolutely convergent series which involves elementary quantities, such as binomial and multinomial coefficients. The explicit and effective
upper bound for the approximation of $m(P_D)$ by a partial sum of this series provides a tool which can be used in estimating  the size of $m(P_D)$,
hence the canonical height of the divisor $\mathcal{D}$ of $P_D$; see \cite{Ma00} and the discussion in Section \ref{subsect: canonical heights}.

\subsection{Outline of the proofs}

In general terms, the analysis of the present paper involves a detailed investigation of the general
Kronecker-type limit formula which was proved in \cite{CJS20}.  In \cite{CJS20}, we considered a general
smooth K\"ahler manifold $X$ with divisor $\mathcal{D}$ which was assumed to be smooth up to codimension two.  For
this article, we take $X$ to be $n$-dimensional complex projective space $\CC\PP^{n}$ for $n\geq 2$ and $\mathcal{D}$
to be a hyperplane, meaning the divisor of a degree one polynomial $P_{D}(z)$.  We equip
$\CC\PP^{n}$ with its natural Fubini-Study metric.

With this setup, we employ a representation of the associated Green's function in terms of the heat kernel;
see \cite{HI02}.  The spectral expansion of the Green's function
also can be computed explicitly; see \cite{Lu98}.  In order to evaluate the Kronecker-type limit function
as in \cite{CJS20}, we need to integrate the Green's function on $\CC\PP^{n}$ along a hyperplane.  In
doing so, the evaluation of such integrals amounts to the Radon transform on projective space for which we
use results from \cite{Gr83}.  Ultimately, we are able to express the log-norm of the polynomial $P_{D}$ as an
absolutely convergent series of Jacobi polynomials.  The evaluation of the Mahler measure then reduces to the
problem of evaluating certain integrals involving Jacobi polynomials, which yields the results stated above.  The
different expressions for the Mahler measure $m(P_{D})$ amount to various identities involving Jacobi polynomials.

We wish to emphasize here that our main result holds for all linear polynomials provided $n \geq 3$; all
our results, except possibly \eqref{measure formula 2} with $\ell\geq 3$ and $D=r(1,1,\ldots,1)$ for some $r\neq 0$, hold when the the divisor of $P_{D}(z)$
intersects the domain of integration in \eqref{Mahler_definition}.
Previous authors such as \cite{Sm81} used techniques of complex analysis to obtain their results.  Their computations are important and
interesting, but are limited because of the logarithmic-type singularities which naturally occur.
From out point of view, such singularities are $L^{2}$, hence can be addressed when using real analytic methods.

\subsection{Organization of the paper}

In Section 2 we state additional notation and recall relevant results from the literature.  In Section 3
we will recall the general Kronecker-type limit formula from \cite{CJS20}, which holds for a reasonably general
K\"ahler manifold $X$, and make the result explicit in the case $X=\CC\PP^{n}$.  In Section 4
we study the results from Section 3 and obtain various expressions for $\log \Vert P_{D}\Vert_\mu$ where the norm is with respect to
the Fubini-Study metric.  In Section 5 we derive a change of variables formula which is an important ingredient in the
proof of our main result, carried out in Section 6. We conclude with Section 7 where we present several comments
regarding the analysis of this article.

\section{Preliminaries}

In this section we will introduce the necessary notation and prove some intermediate results related to the representation of the
resolvent kernel on $\CC\PP^{n}$ and its associated Kronecker limit formula as proved in \cite{CJS20}.

\subsection{Some special functions}

For any non-negative integers $\alpha$, $\beta$ and $j$, we let
$P_j^{(\alpha,\beta)}$ denote the Jacobi polynomial, which is defined for $x \in (-1,1)$ by
\begin{equation}\label{Jacobi_poly}
P_j^{(\alpha,\beta)}(x):=\frac{(-1)^j}{2^j j!}(1-x)^{-\alpha}(1+x)^{-\beta}\frac{d^j}{dx^j}\left[ (1-x)^{\alpha + j} (1+x)^{\beta + j}\right].
\end{equation}
If $\alpha=\beta=0$, then the Jacobi polynomials specialize to the Legrendre polynomials, which can
given by
$$
P_{j}(x) = \frac{1}{2^{j}j!}\frac{d^{j}}{dx^{j}}(x^{2}-1)^{j}.
$$
Many fascinating properties of Jacobi and Legrendre polynomials are developed in the classical text \cite{Sz74}.  In the
present article, we will use the following bound which we quote from \cite{HS14}.  For any $j \geq 1$ and for all $x\in[-1,1]$,
we have that
\begin{equation}\label{poly_bound1}
(1-x^2)^{\tfrac{1}{4}}|P_j(x)|\leq \sqrt{4/\pi}(2j+1)^{-\tfrac{1}{2}}.
\end{equation}
This bound is referred to as the the sharp form of the Bernstein's inequality for Legendre polynomials $P_j=P_j^{(0,0)}$; see Theorem 3.3 of \cite{Sz74}, \cite{Lo82/83}, or the discussion on page 228 of \cite{HS14}.
More generally, we will use the main theorem of \cite{HS14} which gives the uniform upper bound
\begin{equation}\label{poly_bound2}
(1-x^2)^{\tfrac{1}{4}}\left(\frac{1-x}{2}\right)^{(m-1)/2}|P^{(m-1,0)}_j(x)|\leq 12\cdot (2j+m)^{-\tfrac{1}{2}},
\end{equation}
which holds for all positive integers $j$ and $m$ and for $x\in [-1,1]$; see Theorem 1.1 and subsequent discussion on page 228 of \cite{HS14}.

For complex numbers $\nu$ and $z$ with $|\arg z|<\pi$, the Bessel function of the first kind $J_\nu(z)$  is defined by absolutely convergent power series
$$
J_\nu(z):=\frac{z^\nu}{2^\nu}\sum_{k=0}^{\infty}\frac{(-1)^k}{2^{2k}k!\Gamma(\nu+k+1)}z^{2k}.
$$
For non-integral complex $\nu$ and any complex $z$ with $|\arg z|<\pi$ one defines
the Bessel function of the second kind $Y_\nu(z)$ by $Y_\nu(z):=(\sin \pi \nu)^{-1}(\cos(\pi\nu)J_\nu(z)-J_{-\nu}(z))$; when $\nu=n$ is a non-negative integer,
then
\begin{align*}
\pi Y_n(z):&=2J_n(z)\log\left(\frac{z}{2}\right)-\sum_{k=0}^{n-1}\frac{(n-k-1)!}{k!}\left(\frac{z}{2}\right)^{2k-n}\\&-
\sum_{k=0}^{\infty}\frac{(-1)^k(z/2)^{2k+n}}{k!(k+n)!}\left(\frac{\Gamma'}{\Gamma}(k+1)+\frac{\Gamma'}{\Gamma}(k+n+1)\right),
\end{align*}
with the convention that the empty sum when $n=0$ is zero.
A thorough analysis of Bessel functions and functions associated with them can be found in the seminal book \cite{Wa66}.
The article \cite{Kr06} contains very explicit pointwise bounds for Bessel functions.
For our purposes, we will use the inequality from 7.31.2 \cite{Sz74} which states that
\begin{equation}\label{J0 basic ineq}
\vert J_0(2x)\vert \leq (\max \{1,\vert \pi x\vert \})^{-\tfrac{1}{2}}
\end{equation}

\subsection{Complex projective space}

Let $\CC\PP^n$ denote the $n$-dimensional complex projective space with the usual projective coordinates $(\mathcal{Z}_{0},\cdots, \mathcal{Z}_{n})$.  If $U$ is any open set in $\CC\PP^n$ and $z:U\rightarrow \CC^{n+1}\setminus\{0\}$ a holomorphic lifting of $U$, so a holomorphically varying choice of homogeneous coordinates for $z$, then the local K\"ahler potential is given by $\rho(z)=\log\Vert z\Vert^2=\log(|\mathcal{Z}_0|^2 + \cdots + |\mathcal{Z}_n|^2)$.  The K\"ahler $(1,1)$ form $\frac{i}{2}\partial_z\partial_{\bar{z}}\rho$ will be denoted by $\omega$ and we equip $\CC\PP^n$ with the Fubini-Study metric $\mu=\mu_{FS}$ associated to $\omega$.
The Fubini-Study distance between two points $z,w\in\CC\PP^{n}$ will be denoted by $d_{\mathrm{FS}}(z,w)$. It is given by the formula
\[
\cos(d_{\mathrm{FS}}(z,w))=\frac{|\langle z,w\rangle|}{\sqrt{\langle z,z\rangle}\sqrt{\langle w,w\rangle}}
\]
where, if $z=(\mathcal{Z}_0, \dots,\mathcal{Z}_n)$ and $w=(\mathcal{W}_0,\dots , \mathcal{W}_n)$, then $\langle z,w\rangle= z\cdot{^t\overline{w}}=\mathcal{Z}_0\overline{\mathcal{W}_0}+\cdots \mathcal Z_n\overline{\mathcal{W}_n}$.

Occasionally, our computations
will be on the affine chart where $\mathcal{Z}_{0} \neq 0$, so then we will consider the affine coordinates $(z_1,\ldots,z_n)$.
Then the local K\"ahler potential takes the form
$\rho_0(z)=\log(1+|z_1|^2+\ldots+|z_n|^2)$.

Let $P_{D}(z)$ denote any homogenous polynomial with divisor $\mathcal D$.  We denote by $\Vert P_{D}(z)\Vert^{2}_{\mu}$
the $\log$-norm of the polynomial $P_D$ with respect to $\mu$.  The formula for $\Vert P_{D}(z)\Vert^{2}_{\mu}$
is
\begin{equation}\label{log norm of P}
\log \Vert P_{D}(z)\Vert^{2}_{\mu}=\log |P_D(z)|^{2}-\textrm{\rm deg}(P_{D}) \rho(z)
\end{equation}
for $z\in\CC\PP^n \setminus \mathcal{D}$.  If $z$ approaches $\mathcal D$ transversally, then $\log \Vert P_{D}(z)\Vert_{\mu}^{2}$ has a logarithmic
singularity which is $L^{1}$ integrable. For the sake of brevity, we may omit the subscript $\mu$.

Let $\Delta_{\CC\PP^n}$ signify the corresponding Laplacian $\Delta_{\CC\PP^n}$ which acts on smooth functions on $\CC\PP^n$.
An eigenfunction of the Laplacian $\Delta_{\CC\PP^n}$  is an \it a priori \rm $C^{2}$ function $\psi_{j}$
which satisfies the equation
$$
\Delta_{\CC\PP^n} \psi_{j} + \lambda_{j} \psi_{j}= 0
$$
for some constant $\lambda_{j}$, which is the eigenvalue associated to $\psi_{j}$. The spectrum $\{\lambda_j\}_{j\geq0}$ of $\Delta_{\CC\PP^n}$ is well known;
see, for example, \cite{BGM71} or \cite{Lu98}.  Classically, $\lambda_0=0$ where the eigenfunction is the appropriately normalized positive constant function.

Let $\mathrm{vol}_{\mu}(\CC\PP^n)$ denote the volume of $\CC\PP^n$, meaning the integral over $\CC\PP^n$ of the volume form $\mu^n$.
In our normalizations, we have that
$$
\mathrm{vol}_{\mu}(\CC\PP^n) = \frac{\pi^n}{n!}.
$$
Additionally, we have that $\lambda_j=4j(j+n)$ for all $j\geq 1$.  Let $H_{j,j}(n+1)$ be the vector space of eigenfunctions with eigenvalue $\lambda_j$,
Then the dimension $N_{j}$ of $H_{j,j}$ is
$$
N_j=\binom{n+j}{j}^2-\binom{n+j-1}{j-1}^2 = \frac{(n+2j)((n+j-1)!)^2}{n!(n-1)!(j!)^2}.
$$
Moreover, as discussed in section 1 of \cite{Gr83}, the Hilbert space $L^2(\CC\PP^n)$ of all square integrable functions
on $\CC\PP^n$, with respect to the volume form $\mu^n$, has the orthogonal decomposition
$$
L^2(\CC\PP^n)= \bigoplus_{j=0}^{\infty}H_{j,j}(n+1)
$$
into finite dimensional  subspaces $H_{j,j}(n+1)$ consisting of eigenfunctions of the Laplacian with the corresponding eigenvalue $4j(j+n)$. Each subspace $H_{j,j}(n+1)$ is an irreducible representation of the unitary group $\mathbf{U}(n+1)$ and  they are distinct.
More precisely, elements of $H_{j,j}(n+1)$ are homogeneous harmonic polynomials of degree $j$ in the variables $\mathcal{Z}_0,...,\mathcal{Z}_n$
and $\overline{\mathcal{Z}}_0,..., \overline{\mathcal{Z}}_n$.  As is standard, we may assume that coefficients of those harmonic polynomials are real
so then any eigenfunction evaluated at real values of its variables is itself real-valued.

\subsection{The Radon transform}

Let $f$ be a continuous function on $\CC\PP^{n}$, and let $H$ be any hyperplane in $\CC\PP^n$.  The Radon transform
of $f$ evaluated at $H$, which we denote by $Rf(H)$, is defined by
$$
Rf(H)=\int\limits_H f(w)\mu_{H}(w)
$$
where $\mu_{H}(w)$ is the Fubini-Study volume element induced on $H$ from the Fubini-Study metric on $\CC\PP^{n}$.  Denote the Grassmannian of
hyperplanes in $\CC\PP^n$ by $(\CC\PP^{n})^{\ast}$.  Recall that $(\CC\PP^{n})^{\ast}$ is non-canonically isomorphic to $\CC\PP^{n}$.  Let us
make a choice regarding this isomorphism.  Quite simply, the point $(\mathcal{W}_0,\mathcal{W}_1,\ldots,\mathcal{W}_n)\in \CC\PP^n$
is identified with the hyperplane
$$
\{(\mathcal{Z}_0,\mathcal{Z}_1,\ldots,\mathcal{Z}_n)\in\CC\PP^n:
\mathcal{Z}_0\mathcal{W}_0+ \mathcal{Z}_1\mathcal{W}_1+\ldots+\mathcal{Z}_n\mathcal{W}_n=0\}.
$$
As such, we can view the Radon transform $Rf(H)$ of $f$ as a function on $\CC\PP^n$.

As proved in \cite{Gr83}, by Schur's Lemma the Radon transform $R$ acts on $H_{j,j}(n+1)$ by scalar multiplication by $c(j,n)=c_n\cdot \frac{(-1)^j j!}{(j+n-1)!}$
where $c_n$ is certain normalizing factor depending solely on the dimension $n$.  In our setting, the normalizing factor
can be easily computed by evaluating the Radon transform of the $L^{2}$-normalized constant eigenfunction
$\psi_0(w)=\frac{1}{\sqrt{\mathrm{vol}_{\mu}(\CC\PP^n)}}$ and taking $H$ to be the (affine) hyperplane $z_1=0$.
In this case $c(0,n)=c_n\cdot\frac{1}{(n-1)!}$, so then
$$
\int\limits_{H}\frac{1}{\sqrt{\mathrm{vol}_{\mu}(\CC\PP^n)}}\mu_{H}(w)=c_n\cdot\frac{1}{(n-1)!}
\cdot\frac{1}{\sqrt{\mathrm{vol}_{\mu}(\CC\PP^n)}}.
$$
Therefore, $c_n=(n-1)!\cdot \mathrm{vol}_{\mu}(\CC\PP^{n-1})=\pi^{n-1}$. If $\psi_j\in H_{j,j}(n+1)$, where $R$ acts by the scalar $c(j,n)$,  we will simply have $R\psi_j(H)=c(j,n)\psi_j(H)$ where we identify $(\CC\PP^n)^*$ with $\CC\PP^n$ as above.

In summary, we have the following formula.  With the notation and normalizations set above,
for any hyperplane $H$ in $\CC\PP^n$ and any eigenfunction $\psi_j\in H_{j,j}(n+1)$, one has that
\begin{equation}\label{Radon action}
\int\limits_{H}\psi_j(w)\mu_{H}(w)= \pi^{n-1}\cdot \frac{(-1)^j j!}{(j+n-1)!}\psi_j(H).
\end{equation}
Again, it is necessary to note that we have chosen an identification between
the hyperplane $H$, which is a point in $(\CC\PP^{n})^{\ast}$, and a point in $\CC\PP^{n}$, which
through a slight abuse of notation we also write as $H$.

\subsection{Heat kernel and Green's function}

The heat kernel $K_{\CC\PP^n}(z,w;t)$ associated to the Laplacian $\Delta_{\CC\PP^n}$ on $\CC\PP^n$ is the unique solution to the heat-equation
$$
\frac{\partial}{\partial t} K_{\CC\PP^n}(z,w;t) = \Delta_{\CC\PP^n}K_{\CC\PP^n}(z,w;t)
\,\,\,\,\,
\textrm{\rm \,\,\, for \,\,\, $t>0$ \,\,\, and \,\,\,$z,w \in\CC\PP^n$}
$$
such that for any continuous function $f$ on $\CC\PP^{n}$, one has
$$
\lim\limits_{t \rightarrow 0}\int\limits_{\CC\PP^{n}}K_{\CC\PP^n}(z,w;t)f(w) \mu_{\CC\PP^{n}}(w) =  f(z).
$$
As can be shown, $K_{\CC\PP^n}(z,w;t)$ depends only on $t>0$ and the Fubini-Study distance between $z$ and $w$;
see, for example, \cite{Lu98}.  The heat kernel admits a spectral expansion in terms of the eigenfunctions
$\psi_j\in H_{j,j}(n+1)$.  Namely, one has the formula that
\begin{equation} \label{heat kernel spectral exp}
K_{\CC\PP^n}(z,w;t)=\sum_{\lambda_j\geq0}\psi_j(z)\overline{\psi_j(w)}e^{-\lambda_j t}= \frac{1}{\mathrm{vol}_{\mu}(\CC\PP^n)} + \sum_{\lambda_j>0}\psi_j(z)\overline{\psi_j(w)}e^{-\lambda_j t}.
\end{equation}
In \cite{HI02} it is proved that
\begin{equation} \label{heat kernel jacobi pol exp}
K_{\CC\PP^n}(z,w;t)= \frac{1}{\mathrm{vol}_{\mu}(\CC\PP^n)} +\frac{1}{\pi^n}\sum_{j=1}^{\infty}(2j+n)\frac{(j+n-1)!}{j!}P_j^{(n-1,0)}(\cos(2r))e^{-4j(j+n) t},
\end{equation}
where $r=d_{\textrm{FS}}(z, w)$ is the Fubini-Study distance and $P_j^{(\alpha,\beta)}$ is the Jacobi polynomial defined in \eqref{Jacobi_poly}.
Actually, the transition from \eqref{heat kernel spectral exp} to \eqref{heat kernel jacobi pol exp} is based on stronger
results.  Indeed, it is proved that
\begin{equation}\label{eigenfunctions identity}
\sum_{\lambda_j = 4j(n+j)}\psi_j(z)\overline{\psi_j(w)} =
\frac{(2j+n)}{\pi^n}\frac{(j+n-1)!}{j!}P_j^{(n-1,0)}(\cos(2r));
\end{equation}
see Theorem 1 of \cite{Lu98} as well as Theorem 1 and the preceding discussion in \cite{HI02}, keeping in mind
the notation conventions employed in the present article.

The Green's function $G_{\CC\PP^n}(z,w;s)$ on $\CC\PP^n$ is the integeral kernel of the right inverse to the operator $\Delta_{\CC\PP^n}+s(1-s)$ on
$L^{2}(\CC\PP^n)$ for $s \in \CC$.  In order for such an inverse to exist, it is necessary to assume that
$s(1-s)$ is not equal to an eigenvalue of $\Delta_{\CC\PP^n}$.  As discussed in Section 5 of \cite{CJS20} and references therein,
the Green's function $G_{\CC\PP^n}(z,w;s)$ and  the heat kernel $K_{\CC\PP^n}(z,w;t)$ are related by the
formula
\begin{equation} \label{green in terms of heat}
G_{\CC\PP^n}(z,w;s)-\frac{1}{\textrm{\rm vol}_\mu (\CC\PP^n)}\frac{1}{s^2}= \int\limits_0^{\infty}\left( K_{\CC\PP^n}(z,w;t) -
\frac{1}{\textrm{\rm vol}_\mu (\CC\PP^n)}\right)e^{-s^2t} dt.
\end{equation}
The identity (\ref{green in terms of heat}) holds for $s \in \CC$ with
$\Re(s^2)>0$ and for all distinct points $z,w\in\CC\PP^n$.  However, one can use the spectral
expansion of the heat kernel in order to obtain a meromorphic continuation of (\ref{green in terms of heat})
to all $s \in \CC$.

\section{A Kronecker limit formula}

The following result from \cite{CJS20} is an analogue of the classical Kronecker's limit formula, which
is stated here in the setting of projective space.

\begin{thm}\label{thm4 from cjs} Let $\mathcal D$ be the divisor of a polynomial $P_D$ on $\CC\PP^n$, and assume that
$\mathcal D$ is smooth up to codimension two in $\CC\PP^n$. Then there exist constants $c_0$ and $c_1$
such that for $z\notin \mathcal D$ we have that
\begin{equation}\label{e.Gseries_KLF}
\int\limits_{\mathcal D}G_{\CC\PP^n}(z,w;s)\mu_{\mathcal D}(w) = \frac{\textrm{\rm vol}_\mu (\mathcal{D})}{\textrm{\rm vol}_\mu (\CC\PP^n)}\frac{1}{s^2} +
c_{0}\log \Vert P_{D}(z)\Vert^{2}_{\mu} +c_{1}+ O(s)
\,\,\,\,\,
\textrm{as $s \rightarrow 0$.}
\end{equation}
\end{thm}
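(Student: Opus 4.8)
The plan is to establish \eqref{e.Gseries_KLF} by a direct spectral computation in the case relevant here, where $\mathcal{D}$ is a hyperplane (so that $\mathcal{D}\cong\CC\PP^{n-1}$ is automatically smooth and the Radon transform machinery of Section 2 applies); the fully general case is the content of \cite{CJS20}. First I would integrate the heat-kernel representation \eqref{green in terms of heat} of the Green's function in the $w$ variable over $\mathcal{D}$. The constant $1/\mathrm{vol}_\mu(\CC\PP^n)$ integrates to $\mathrm{vol}_\mu(\mathcal{D})/\mathrm{vol}_\mu(\CC\PP^n)$, which produces exactly the polar term $\frac{\mathrm{vol}_\mu(\mathcal{D})}{\mathrm{vol}_\mu(\CC\PP^n)}s^{-2}$ claimed in \eqref{e.Gseries_KLF}, and reduces matters to understanding
$$
\int\limits_0^{\infty}\left(\int\limits_{\mathcal{D}}K_{\CC\PP^n}(z,w;t)\,\mu_{\mathcal{D}}(w)-\frac{\mathrm{vol}_\mu(\mathcal{D})}{\mathrm{vol}_\mu(\CC\PP^n)}\right)e^{-s^2t}\,dt
$$
as $s\to 0$.

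The key step is to evaluate the integrated heat kernel in closed form. I would take the reproducing kernel of each eigenspace, given by \eqref{eigenfunctions identity}, and integrate it over $\mathcal{D}$ via the Radon transform identity \eqref{Radon action}, which replaces $\int_{\mathcal{D}}\overline{\psi_j(w)}\,\mu_{\mathcal{D}}(w)$ by the scalar $c(j,n)\overline{\psi_j(p)}$, where $p$ is the pole of the hyperplane (explicitly $p=\bar{D}$). Using $d_{\mathrm{FS}}(z,p)=\tfrac{\pi}{2}-r$ with $r=d_{\mathrm{FS}}(z,\mathcal{D})$ together with the reflection $P_j^{(n-1,0)}(-x)=(-1)^jP_j^{(0,n-1)}(x)$, the factorials in $c(j,n)$ and in \eqref{eigenfunctions identity} telescope, and I expect to obtain
$$
\int\limits_{\mathcal{D}}K_{\CC\PP^n}(z,w;t)\,\mu_{\mathcal{D}}(w)=\frac{\mathrm{vol}_\mu(\mathcal{D})}{\mathrm{vol}_\mu(\CC\PP^n)}+\frac{1}{\pi}\sum_{j=1}^{\infty}(2j+n)P_j^{(0,n-1)}(\cos(2r))e^{-4j(j+n)t}.
$$
Integrating term by term against $e^{-s^2t}$, justified using the uniform bound \eqref{poly_bound2} on $P_j^{(0,n-1)}$ (via the reflection symmetry) and the decay from $e^{-s^2t}$, gives $\frac{1}{\pi}\sum_{j\geq1}\frac{(2j+n)}{4j(j+n)+s^2}P_j^{(0,n-1)}(\cos(2r))$. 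Letting $s\to 0$ leaves the finite series $\frac{1}{\pi}\sum_{j\geq1}\frac{2j+n}{4j(j+n)}P_j^{(0,n-1)}(\cos(2r))$ with remainder $O(s^2)$, which is certainly $O(s)$.

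It remains to recognize this constant term as an affine function of $\log\Vert P_D(z)\Vert_\mu^2$. Writing $x=\cos(2r)$ and using the elementary Fubini--Study identity $\sin^2 r=|P_D(z)|^2/(\Vert z\Vert^2\Vert D\Vert^2)$, formula \eqref{log norm of P} with $\deg(P_D)=1$ gives $\log\Vert P_D(z)\Vert_\mu^2=\log\Vert D\Vert^2+\log\tfrac{1-x}{2}$. Hence, for $x\in(-1,1)$, the theorem is equivalent to the Jacobi summation identity
$$
\sum_{j=1}^{\infty}\frac{2j+n}{4j(j+n)}P_j^{(0,n-1)}(x)=A\log(1-x)+B,
$$
with constants $A,B$ depending only on $n$; then $c_0=A/\pi$ while $c_1$ collects $B$ and $\log\Vert D\Vert^2$. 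I expect this identity to be the main obstacle. I would attack it either through the classical generating function for $\{P_j^{(0,n-1)}\}$ followed by an integration in the generating parameter, or, more structurally, by noting that $\tfrac{1}{4j(j+n)}=1/\lambda_j$, so that the left-hand side is a resolvent of the radial Jacobi operator at spectral parameter $0$: the sum then solves the homogeneous Jacobi equation away from $x=1$, where the regular-singular point with exponent $0$ forces a logarithmic solution. Since $P_j^{(0,n-1)}(1)=1$ produces the tell-tale divergence $\sum_j\tfrac{1}{2j}$ as $x\to1$, the coefficient $A$ is determined by matching this singularity and $B$ by a single normalization. Throughout, convergence on $(-1,1)$ and the term-by-term Mellin inversion are controlled by \eqref{poly_bound2}; the series diverges logarithmically as $z\to\mathcal{D}$ (i.e. $x\to1$), which is exactly the $L^1$ singularity anticipated after \eqref{log norm of P}, while the pole $z=p$ (i.e. $x=-1$) is recovered by continuity.
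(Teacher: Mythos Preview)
The paper does not prove this theorem at all; it is quoted from \cite{CJS20} and used as a black box. Your proposal is therefore not a variant of the paper's proof but a genuinely different, self-contained argument in the hyperplane case, and it essentially reverses the paper's logical flow.

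Your Radon/heat-kernel computation is exactly the content of Proposition~\ref{prop:greens function evaluation}: the paper writes the result with $(-1)^jP_j^{(n-1,0)}$ and the distance to the pole $D$, which becomes your $P_j^{(0,n-1)}$ in the distance $r$ to the hyperplane under the reflection you invoke. Where the two approaches diverge is what comes next. The paper \emph{combines} Proposition~\ref{prop:greens function evaluation} with the quoted Theorem~\ref{thm4 from cjs} (and the constants computed in Proposition~\ref{prop:Thm4 improved}) to \emph{deduce} the Jacobi summation identity \eqref{starting identity}. You instead need that identity as an \emph{input}, and you correctly flag it as the main obstacle.

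Your two sketched attacks on the identity (generating function, or the ODE/resolvent argument forcing a logarithmic solution at the regular singular point $x=1$) are reasonable, but note that the paper in fact contains, later on, an independent special-function derivation of precisely what you need: Proposition~\ref{prop_Legendra_formulas} reduces the $(\ell-1,0)$ Jacobi sum to the Legendre case and then to $-\tfrac{d}{d\nu}F(-\nu,\nu+1;1;\sin^2 r)\big|_{\nu=0}$ via classical formulas from \cite{GR07}, and equation \eqref{hyper_series} evaluates this derivative term by term as $-\sum_{j\ge1}\tfrac{1}{j}\sin^{2j}r=\log\cos^2 r$. Neither of these steps uses Theorem~\ref{thm4 from cjs}, so they can be transplanted to close your argument. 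In short, your route is sound and all its ingredients are present in the paper, just organized as consequences of the cited theorem rather than as its proof; what the paper gains from quoting \cite{CJS20} is the general smooth-divisor statement, which your spectral argument does not reach.
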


Let us now consider the case when $P_D$ is a linear polynomial in $n+1$ projective coordinates $(\mathcal{Z}_{0},\ldots,\mathcal{Z}_n)$ of $\CC\PP^n$.
 With this, Theorem \ref{thm4 from cjs} becomes the following result.

\begin{prop}\label{prop:Thm4 improved}
Let $P_D(z)=P_D(\mathcal{Z}_{0},\ldots,\mathcal{Z}_n)=\mathcal{W}_0\mathcal{Z}_{0}+\mathcal{W}_1\mathcal{Z}_1+...+\mathcal{W}_n\mathcal{Z}_n$ be the linear polynomial in $n+1$
complex projective coordinates variables with the divisor $\mathcal D$.  Let $H_{n}$ denote the $n$-th harmonic number.  Then, for $z\notin \mathcal D$ we have that
\begin{align}\nonumber
\int\limits_{\mathcal D}G_{\CC\PP^n}(z,w;s)\mu_{\mathcal D}(w) &= \frac{\textrm{\rm vol}_\mu (\mathcal D)}{\textrm{\rm vol}_\mu (\CC\PP^n)}\frac{1}{s^2}- \frac{1}{4\pi}\log| P_{D}(z)|^{2}
\\& \label{e.Gseries_KLF new} + \frac{1}{4\pi}\left(\log||D||^2+\rho(z) -H_n\right) + O(s)
\,\,\,\,\,
\textrm{as $s \rightarrow 0$.}
\end{align}
\end{prop}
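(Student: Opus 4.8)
The plan is to prove the proposition by computing the left-hand side of \eqref{e.Gseries_KLF new} directly from the spectral data, rather than merely invoking the existence statement of Theorem \ref{thm4 from cjs}; this has the advantage of pinning down both constants $c_0$ and $c_1$ of \eqref{e.Gseries_KLF} at once. Throughout I identify, via the isomorphism chosen in the Radon transform section, the hyperplane $\mathcal D=\{P_D=0\}$ with its dual point in $\CC\PP^n$, so that the Fubini-Study distance $r=d_{\mathrm{FS}}(z,\mathcal D)$ from $z$ to that point satisfies $\cos^2 r=|P_D(z)|^2/(\Vert z\Vert^2\Vert D\Vert^2)$; equivalently $\tfrac12(1+\cos 2r)=|P_D(z)|^2/(\Vert z\Vert^2\Vert D\Vert^2)$.

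First I would insert the spectral/heat-kernel representation of the Green's function. Combining \eqref{green in terms of heat} with the spectral expansion \eqref{heat kernel spectral exp} and performing the $t$-integral gives, for $\Re(s^2)>0$,
\[
G_{\CC\PP^n}(z,w;s)=\frac{1}{\mathrm{vol}_\mu(\CC\PP^n)}\frac{1}{s^2}+\sum_{j=1}^\infty\frac{1}{4j(j+n)+s^2}\sum_{\lambda_k=4j(j+n)}\psi_k(z)\overline{\psi_k(w)}.
\]
I then integrate this in $w$ over $\mathcal D$. The constant term contributes $\mathrm{vol}_\mu(\mathcal D)/\mathrm{vol}_\mu(\CC\PP^n)\cdot s^{-2}$, the singular term of \eqref{e.Gseries_KLF new}. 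For each fixed $j$ the inner sum is, as a function of $w$, an element of $H_{j,j}(n+1)$, so the Radon transform formula \eqref{Radon action} applies and evaluates its integral over $\mathcal D$ at the dual point; by the closed form \eqref{eigenfunctions identity} this equals the zonal value $\tfrac{(2j+n)}{\pi^n}\tfrac{(j+n-1)!}{j!}P_j^{(n-1,0)}(\cos 2r)$. Collecting the factor $\pi^{n-1}\tfrac{(-1)^j j!}{(j+n-1)!}$ from \eqref{Radon action}, the series reduces to
\[
\int_{\mathcal D}G_{\CC\PP^n}(z,w;s)\mu_{\mathcal D}(w)=\frac{\mathrm{vol}_\mu(\mathcal D)}{\mathrm{vol}_\mu(\CC\PP^n)}\frac{1}{s^2}+\frac{1}{\pi}\sum_{j=1}^\infty\frac{(-1)^j(2j+n)}{4j(j+n)+s^2}P_j^{(n-1,0)}(\cos 2r).
\]

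Next I would justify letting $s\to 0$ termwise. For $z\notin\mathcal D$ and away from the isolated dual point one has $x=\cos 2r\in(-1,1)$, so the Bernstein-type bound \eqref{poly_bound2} yields $|P_j^{(n-1,0)}(x)|\le C(x)\,(2j+n)^{-1/2}$; since $\tfrac{2j+n}{4j(j+n)+s^2}\sim j^{-1}$, the general term is $O(j^{-3/2})$ uniformly for $s$ near $0$, the series converges absolutely and uniformly, and the $O(s)$ remainder is controlled. Using $\tfrac{2j+n}{4j(j+n)}=\tfrac14(\tfrac1j+\tfrac1{j+n})$, the finite part as $s\to 0$ becomes $\tfrac{1}{4\pi}\sum_{j\ge1}(-1)^j(\tfrac1j+\tfrac1{j+n})P_j^{(n-1,0)}(x)$. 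Comparing with \eqref{e.Gseries_KLF} and \eqref{log norm of P} already forces $c_0=-\tfrac{1}{4\pi}$, so the remaining task is the evaluation of this Jacobi series.

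The crux, which I regard as the main obstacle, is therefore the summation identity
\[
\sum_{j=1}^\infty(-1)^j\Big(\frac1j+\frac1{j+n}\Big)P_j^{(n-1,0)}(x)=-\log\frac{1+x}{2}-H_n,\qquad x\in(-1,1).
\]
I would establish it by expanding the right-hand side in the orthogonal basis $\{P_j^{(n-1,0)}\}$ for the weight $(1-x)^{n-1}$ on $[-1,1]$, computing the coefficients from Rodrigues' formula \eqref{Jacobi_poly} and repeated integration by parts; alternatively one integrates the classical generating function $\sum_{j\ge0}P_j^{(n-1,0)}(x)t^j=2^{n-1}/(R(1-t+R)^{n-1})$ with $R=\sqrt{1-2xt+t^2}$ against $dt/t$ and against $t^{n-1}\,dt$ and evaluates at $t=-1$, the $\int dt/t$ producing the logarithm. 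The constant is cleanest to verify: after the substitution $u=\tfrac12(1+x)$, the $j=0$ coefficient is $\tfrac{n}{2^n}\int_{-1}^1(-\log\tfrac{1+x}{2})(1-x)^{n-1}\,dx=n\int_0^1(-\log u)(1-u)^{n-1}\,du=-n\,\tfrac{d}{ds}B(s,n)\big|_{s=1}=H_n$, using $B(1,n)=\tfrac1n$ and $\psi(1)-\psi(1+n)=-H_n$. Once the identity is in hand, substituting $\tfrac12(1+x)=|P_D(z)|^2/(\Vert z\Vert^2\Vert D\Vert^2)$ together with $\rho(z)=\log\Vert z\Vert^2$ turns the finite part into $-\tfrac{1}{4\pi}\log|P_D(z)|^2+\tfrac{1}{4\pi}(\log\Vert D\Vert^2+\rho(z)-H_n)$, which is exactly \eqref{e.Gseries_KLF new} and simultaneously yields $c_1=\tfrac{1}{4\pi}(\log\Vert D\Vert^2-H_n)$.
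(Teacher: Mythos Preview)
Your argument is correct but takes a genuinely different route from the paper. The paper's own proof invokes the abstract Kronecker limit formula of Theorem~\ref{thm4 from cjs}, cites the normalization $c_0=-1/(4\pi)$ from the literature, and then determines $c_1$ by integrating the identity $\int_{\mathcal D}G_{\CC\PP^n}(z,w)\mu_{\mathcal D}(w)=c_0\log\Vert P_D\Vert_\mu^2+c_1$ over all of $\CC\PP^n$; the resulting average $\int_{\CC\PP^n}\log\Vert P_D\Vert_\mu^2\,\mu_{\CC\PP^n}$ is computed by a unitary rotation reducing $P_D$ to $c\,\mathcal Z_0$ and a polar-coordinate evaluation producing the harmonic number $H_n$. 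You instead go directly through the spectral expansion and the Radon transform---which is exactly the content of the paper's \emph{subsequent} Proposition~\ref{prop:greens function evaluation}---and then evaluate the resulting Jacobi series in closed form by computing the Jacobi--Fourier coefficients of $-\log\tfrac{1+x}{2}$ via Rodrigues' formula; your coefficient computation (including the $j=0$ term giving $H_n$) is correct. In effect you reverse the paper's logical order: the paper derives the same summation identity only later, by combining Propositions~\ref{prop:Thm4 improved} and~\ref{prop:greens function evaluation} (see equation~\eqref{starting identity}, and compare Proposition~\ref{prop_Legendra_formulas} with~\eqref{hyper_series}). Your approach has the advantage of pinning down both $c_0$ and $c_1$ in one stroke without an external citation; the paper's approach is more robust in that the averaging argument works for any divisor smooth in codimension two, whereas your Radon-transform shortcut is specific to hyperplanes.
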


\begin{proof}
Set
$$
G_{\CC\PP^n}(z,w)=\lim_{s\to 0}\left(G_{\CC\PP^n}(z,w;s)\mu_{\mathcal D}(w) - \frac{1}{\textrm{\rm vol}_\mu (\CC\PP^n)}\frac{1}{s^2} \right).
$$
Then
$G_{\CC\PP^n}(z,w)$ is the integral kernel which inverts the action of the Laplacian when restricted to the subspace
of $L^{2}(\CC\PP^{n})$ that is orthogonal to the constant functions.
Formula \eqref{e.Gseries_KLF} in Theorem \ref{thm4 from cjs} can be written as
$$
\int\limits_{\mathcal D}G_{\CC\PP^n}(z,w)\mu_{\mathcal D}(w) = c_{0}\log \Vert P_{D}(z)\Vert^{2}_{\mu} +c_{1}.
$$
If we now integrate over $\CC\PP^n$ with respect to $\mu_{\CC\PP^{n}}(z)$, we get that
$$
0=c_0\int\limits_{\CC\PP^n}\log \Vert P_{D}(z)\Vert^{2}_{\mu}\mu_{\CC\PP^{n}}(z) + c_1\textrm{\rm vol}_\mu (\CC\PP^n).
$$
In our normalizations, we have that $c_0=-1/4\pi$; see, for example, page 94 of \cite{Fo76} and page
10 of \cite{La88} as well as page 338 of \cite{JK98}.   As a result, we have
\begin{equation}\label{c1 staring formula}
c_1=\frac{1}{4\pi}\cdot\frac{1}{\textrm{\rm vol}_\mu (\CC\PP^n)}\int\limits_{\CC\PP^n}\log \Vert P_{D}(z)\Vert^{2}_{\mu}\mu_{\CC\PP^{n}}(z).
\end{equation}
It is left to evaluate the integral on the right-hand side of \eqref{c1 staring formula}.

Set $D = (\mathcal{W}_0, \mathcal{W}_1,\ldots,\mathcal{W}_n)$, so then $P_D(z) = D\cdot {^tz}$.  One can choose an
element $\gamma \in \mathbf{U}(n+1)$ so that $ D \gamma= (c,0,\ldots, 0)$.  Through a rescaling of $w$ by a complex
number of absolute value one, we may assume that
$$
c = \Vert D \Vert.
$$
Let $z=\tilde{z} {^t{\gamma}}$, where $\tilde{z}=(\tilde{\mathcal{Z}}_{0}, \tilde{\mathcal{Z}}_{1},\ldots, \tilde{\mathcal{Z}}_{n})$.
On the affine chart $\tilde{\mathcal{Z}}_{0} \neq 0$, the
polynomial $P_{D}(z)$ is simply $P_D(z)=P_D(\tilde{z}{^t\gamma}) = D\gamma {^t\tilde{z}}=c\tilde{\mathcal{Z}}_{0}$.

By the $\mathbf{U}(n+1)$ invariance of the Fubini-Study volume form
\[
\int\limits_{\CC\PP^n}\log \Vert P_D(z)\Vert^{2}_{\mu}\mu_{\CC\PP^{n}}(z)
= \int\limits_{\CC\PP^n}\log \Vert P_D(\tilde{z}{^t{\gamma}})\Vert^{2}_{\mu}\mu_{\CC\PP^{n}}(\tilde{z}).
\]
By  \eqref{log norm of P}  we have
\[
\log \Vert P_{D}(\tilde{z}{^t\gamma})\Vert^{2}_{\mu}=\log |P_D(\tilde{z}{^t\gamma})|^{2}-\textrm{\rm deg}(P_{D}) \rho(\tilde{z}{^t\gamma}).
\]
Here  we have
$\log|P_D(\tilde{z}{^t\gamma})|^2=\log|c\tilde{\mathcal{Z}}_0|^2=\log(c^2)+\log|\tilde{\mathcal{Z}}_0|^2$, $\deg(P_D)=1$,
and $\rho(\tilde{z}{^t\gamma})=\log\Vert\tilde{z}{^t\gamma}\Vert^2=\log\Vert \tilde{z}\Vert^2=\log(|\tilde{\mathcal{Z}}_0|^2+\cdots+|\tilde{\mathcal{Z}}_n|^2)$.
Therefore
\[
\log \Vert P_{D}(\tilde{z}{^t\gamma})\Vert^{2}_{\mu}=\log(c^2)+\log|\tilde{\mathcal{Z}}_0|^2-\log(|\tilde{\mathcal{Z}}_0|^2+\cdots+|\tilde{\mathcal{Z}}_n|^2)=\log\Vert D\Vert^2-\rho_0(\tilde{z})
\]
and
\begin{equation}
\int\limits_{\CC\PP^n}\log \Vert P_D(z)\Vert^{2}_{\mu}\mu_{\CC\PP^{n}}(z)=\log\Vert D \Vert^2\textrm{\rm vol}_\mu (\CC\PP^n) - \int\limits_{\CC\PP^n}\rho_0(\tilde{z})\mu_{\CC\PP^{n}}(\tilde{z}). \label{integral of polynom}
\end{equation}

The last integral can be evaluated using polar coordinates on the affine chart $\tilde{\mathcal{Z}}_{0}\neq 0$, which
can be viewed as $\CC^{n}$.  Indeed, if we let $S^{2n-1}$ denote the unit sphere in $\CC^{n}$, then
\begin{align*}
\int\limits_{\CC\PP^n}\rho_0(\tilde{z})\mu_{\CC\PP^{n}}(\tilde{z})&=
\text{\rm vol}(S^{2n-1})
\int\limits_{0}^{\infty}\log (1+\rho^{2})\frac{\rho^{2n-1}}{(\rho^{2}+1)^{n+1}}d\rho
\\&=
n \textrm{\rm vol}_\mu (\CC\PP^n)
\int\limits_{1}^{\infty}\log (t)\frac{(1-t)^{n-1}}{t^{n+1}}dt.
\end{align*}
The last integral can be evaluated by using formula 4.253.3 from \cite{GR07} with $u=1$, $\mu=n$ and $\lambda=n+1$.
With this, we obtain that
$$
\int\limits_{\CC\PP^n} \rho_0(\tilde{z})\mu_{\CC\PP^{n}}(\tilde{z})=  \textrm{\rm vol}_\mu (\CC\PP^n)\left(\frac{\Gamma'}{\Gamma}(n+1)- \frac{\Gamma'}{\Gamma}(1)\right) = \textrm{\rm vol}_\mu (\CC\PP^n)H_n.
$$
Combining this evaluation with \eqref{integral of polynom} and \eqref{c1 staring formula} we arrive at the formula that
$$
c_1=\frac{1}{4\pi}\left(\log||D||^2 -H_n\right).
$$
By substituting this expression together with $c_0=-1/(4\pi)$ into \eqref{e.Gseries_KLF}, and employing \eqref{log norm of P} we have completed our proof of \eqref{e.Gseries_KLF new}.
\end{proof}

\begin{rem}\rm
An anonymous referee informed us that a different approach may be undertaken to derive the calculations in this section by relating $m(P_{D})$ to a computation of integrals of
Green's functions for the divisor $\mathcal{D}$; see Section \ref{subsect: canonical heights} for further discussion.
With this approach, the computation of constant $c_1$ in the above proposition is related to the proof of Proposition 5.3 from \cite{GS90}.
\end{rem}

\section{The $\log$-norm of a linear polynomial}\label{sec:log norm}

In this section we will refine further the results of Theorem \ref{thm4 from cjs} and Proposition \ref{prop:Thm4 improved} by expressing
the integral of the Green's function in terms of a certain series involving Jacobi polynomials.  The convergence issues will
be addressed by appealing to the bounds \eqref{poly_bound1} and \eqref{poly_bound2}.

\begin{prop}\label{prop:greens function evaluation}
 With the above notation, assume that $\mathrm{Re}(s^2)>0$,  $z\notin \mathcal D \cup\{D\}$. Then, we have the relation
\begin{align}\nonumber \label{green in terms of sum}
\int\limits_{\mathcal D}\left(G_{\CC\PP^n}(z,w;s)-\frac{1}{\textrm{\rm vol}_\mu (\CC\PP^n)}\frac{1}{s^2}\right)&\mu_{\mathcal D}(w)
\\= \frac{1}{\pi} \sum_{j=1}^{\infty} \frac{(2j+n)(-1)^j}{s^2+4j(j+n)} &P_j^{(n-1,0)}(\cos(2d_{\textrm{FS}}(z, {D}))).
\end{align}
The series in \eqref{green in terms of sum} converges uniformly and absolutely when $z$
and $s$ lie in compact subsets of the above specified regions.
\end{prop}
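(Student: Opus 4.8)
The plan is to combine the heat-kernel representation of the Green's function with the Radon transform on $\CC\PP^{n}$, the decisive feature being a cancellation of growing factorials. Starting from the Jacobi-polynomial form of the heat kernel in \eqref{heat kernel jacobi pol exp}, I would subtract the constant term and, for each fixed $t>0$, integrate over the divisor $\mathcal{D}$. For fixed $t$ the Gaussian factor $e^{-4j(j+n)t}$ dominates the polynomial growth of both the coefficients $(2j+n)(j+n-1)!/j!$ and of $\int_{\mathcal{D}}|P_j^{(n-1,0)}(\cos 2 d_{\mathrm{FS}}(z,w))|\mu_{\mathcal{D}}(w)$, so one may interchange sum and integral by dominated convergence.

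Next I would apply the Radon transform term by term. By the reproducing-kernel identity \eqref{eigenfunctions identity}, for fixed $z$ the angular function $w \mapsto P_j^{(n-1,0)}(\cos 2 d_{\mathrm{FS}}(z,w))$ is a finite linear combination of the eigenfunctions $\overline{\psi_j}$ and hence lies in $H_{j,j}(n+1)$; therefore \eqref{Radon action} applies, and integration over $\mathcal{D}$ produces the scalar $c(j,n)=\pi^{n-1}(-1)^j j!/(j+n-1)!$ times the evaluation of the same angular function at the dual point $D$, namely $P_j^{(n-1,0)}(\cos 2 d_{\mathrm{FS}}(z,D))$. The crucial point is that the factor $j!/(j+n-1)!$ from the Radon scalar exactly cancels the $(j+n-1)!/j!$ in the heat-kernel coefficient, and the powers of $\pi$ collapse to $\pi^{-1}$, leaving $\int_{\mathcal{D}}(K_{\CC\PP^n}-1/\mathrm{vol}_\mu(\CC\PP^n))\mu_{\mathcal{D}} = \frac{1}{\pi}\sum_{j\ge1}(2j+n)(-1)^j e^{-4j(j+n)t}P_j^{(n-1,0)}(\cos 2 d_{\mathrm{FS}}(z,D))$. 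Finally, using \eqref{green in terms of heat} I would multiply by $e^{-s^2 t}$, integrate in $t$, and interchange the $t$-integral with the $j$-sum; each elementary integral $\int_0^\infty e^{-(4j(j+n)+s^2)t}\,dt$ contributes $1/(s^2+4j(j+n))$, yielding the asserted series \eqref{green in terms of sum}.

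The justification of this last interchange and the claimed absolute and uniform convergence both rest on the Bernstein-type bound \eqref{poly_bound2} with $m=n$. Writing $x=\cos 2r$ with $r=d_{\mathrm{FS}}(z,D)$, one has $(1-x^2)^{1/4}=|\sin 2r|^{1/2}$ and $((1-x)/2)^{(n-1)/2}=|\sin r|^{n-1}$, so \eqref{poly_bound2} gives $|P_j^{(n-1,0)}(\cos 2r)|\le 12(2j+n)^{-1/2}|\sin 2r|^{-1/2}|\sin r|^{-(n-1)}$. Since $(2j+n)/|s^2+4j(j+n)|=O(j^{-1})$, the $j$-th term of the final series is $O(j^{-3/2})$, which gives absolute convergence and, by Tonelli, legitimizes the $t$-sum interchange; the hypothesis $z\notin\mathcal{D}\cup\{D\}$ keeps $r$ away from $0$ and $\pi/2$ so that $\sin r$ and $\sin 2r$ stay bounded below, and on compacta in $z$ and in $s$ (the latter avoiding the poles $s^2=-4j(j+n)$) the bounding constants are uniform, yielding uniform convergence.

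The main obstacle is not any single computation but the order in which the operations are carried out. The pointwise Jacobi expansion of $G_{\CC\PP^n}(z,w;s)$ itself is \emph{not} absolutely convergent, since its coefficients grow like $j^{n-1}$, so one cannot expand $G_{\CC\PP^n}$ and integrate term by term against $\mu_{\mathcal{D}}$ directly. One must integrate over $\mathcal{D}$, that is, apply the Radon transform, \emph{before} summing, because only the Radon scalar supplies the factor $j!/(j+n-1)!$ that tames this growth and renders the resulting series summable. Making the two interchanges rigorous — dominated convergence for fixed $t$ via the Gaussian decay, then Tonelli for the $t$-integral via the $O(j^{-3/2})$ bound — is where the genuine care is required.
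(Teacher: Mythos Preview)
Your proposal is correct and follows essentially the same approach as the paper: express the Green's function via \eqref{green in terms of heat}, integrate the heat-kernel expansion over $\mathcal{D}$ using the Radon scalar \eqref{Radon action}, observe the cancellation $\frac{(j+n-1)!}{j!}\cdot\frac{j!}{(j+n-1)!}$, then integrate in $t$ and justify both interchanges and the final convergence via \eqref{poly_bound2}. The only cosmetic difference is that you apply \eqref{Radon action} directly to the zonal function $w\mapsto P_j^{(n-1,0)}(\cos 2d_{\mathrm{FS}}(z,w))$ as an element of $H_{j,j}(n+1)$, whereas the paper first passes to the basis eigenfunctions $\psi_j$, handles the conjugation $\int_{\mathcal D}\overline{\psi_j}=\int_{\overline{\mathcal D}}\psi_j$ explicitly, and then recombines via \eqref{eigenfunctions identity}; both routes yield the same computation.
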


\begin{proof}
We start with the expression \eqref{green in terms of heat} for the Green's function in terms of the heat kernel
$K_{\CC\PP^n}(z,w;t)$ on $\CC\PP^n$ for $\mathrm{Re}(s^2)>0$. Using the equations \eqref{heat kernel spectral exp} and \eqref{heat kernel jacobi pol exp} the function under the integral sign in \eqref{green in terms of heat} can be written as
\begin{align*}
K_{\CC\PP^n}(z,w;t) - \frac{1}{\textrm{\rm vol}_\mu (\CC\PP^n)}&=\sum_{\lambda_j>0}\psi_j(z)\overline{\psi_j(w)}e^{-\lambda_j t} \\&= \frac{1}{\pi^n}\sum_{j=1}^{\infty}(2j+n)\frac{(j+n-1)!}{j!}P_j^{(n-1,0)}(\cos(2r))e^{-4j(j+n) t},
\end{align*}
where we have set $r=d_{\textrm{FS}}(z, w)$.  The above series is
uniformly bounded provided the distance between $z$ and $w$ is bounded away from zero, which is
satisfied for $z \notin \mathcal D$ since we will consider $w \in \mathcal D$.



 Therefore,
we can apply the Fubini-Tonelli theorem to get that
$$
\int\limits_{\mathcal D}\left(G_{\CC\PP^n}(z,w;s)-\frac{1}{\textrm{\rm vol}_\mu (\CC\PP^n)}\frac{1}{s^2}\right)\mu_{\mathcal D}(w)= \int\limits_0^{\infty}\int\limits_{\mathcal D}\left( \sum_{\lambda_j>0}\psi_j(z)\overline{\psi_j(w)}e^{-\lambda_j t}\right)\mu_{\mathcal D}(w) e^{-s^2t} dt.
$$
As stated above, we may assume that the eigenfunctions $\psi_j$ are homogeneous polynomials with real coefficients, so then
$$
\int\limits_{\mathcal D}\overline{\psi_j(w)}\mu_{\mathcal D}(w)=\int\limits_{\mathcal D}\psi_j(\overline{w})\mu_{\mathcal D}(w)= \int\limits_{\overline{\mathcal D}}\psi_j(w)\mu_{\overline{\mathcal D}}(w).
$$
In other words, integrating $\overline{\psi_j(w)}$ over $\mathcal D$ amounts to taking the Radon transform of the function $\psi_j$
 which belongs to the subspace $H_{j,j}(n+1)$, so the integral equals the integral of $\psi_{j}(w)$ over the
 conjugate hyperplane $\overline{\mathcal D}$. On the other hand, under our isomorphism $(\CC\PP^n)^*\simeq \CC\PP^n$ we have that $\overline{\mathcal{D}}$ corresponds to $\overline{D}$.
Let us make this precise. $\overline{\mathcal D}$ is the divisor $\overline{\mathcal{D}}=\{\overline{v}=(\overline{\mathcal{V}}_0, \dots,\overline{\mathcal{V}}_n) \mid P_D(\overline{v})=0\}$. Now $P_D(\overline{v})=D\cdot {^t\overline{v}}$ so $P_D(\overline{v})=0$ is equivalent to $D\cdot{^t\overline{v}}=0=\overline{D}\cdot{^tv}$. So under the isomorphism $(\CC\PP^n)^*\simeq \CC\PP^n$ we have $\overline{\mathcal{D}}$ corresponds to $\overline{D}$.


For fixed and positive $t$, we can use equation \eqref{Radon action} as applied to the hyperplane $\overline{\mathcal D}$ combined with the equation \eqref{eigenfunctions identity} to get the formula
\begin{align} \label{int over D}\nonumber
\int\limits_{\mathcal D} \sum_{\lambda_j>0}\psi_j(z)\overline{\psi_j(w)}&e^{-\lambda_j t}\mu_{\mathcal D}(w)= \sum_{j=1}^{\infty} \frac{\pi^{n-1}(-1)^j j!}{(j+n-1)!} \sum_{\lambda_j=4j(n+j)}\psi_j(z)\psi_j(\overline{\mathcal D})e^{-\lambda_j t} \nonumber \\
&=\sum_{j=1}^{\infty} \frac{\pi^{n-1}(-1)^j j!}{(j+n-1)!} \sum_{\lambda_j=4j(n+j)}\psi_j(z)\psi_j(\overline{ D})e^{-\lambda_j t} \nonumber\\
&=\sum_{j=1}^{\infty} \frac{\pi^{n-1}(-1)^j j!}{(j+n-1)!} \sum_{\lambda_j=4j(n+j)}\psi_j(z)\overline{\psi_j({ D})}e^{-\lambda_j t} \nonumber\\
&=\sum_{j=1}^{\infty} \frac{\pi^{n-1}(-1)^j j!}{(j+n-1)!} \frac{(2j+n)}{\pi^n}\frac{(j+n-1)!}{j!}P_j^{(n-1,0)}(\cos(2d_{\textrm{FS}}(z,D)))e^{-\lambda_j t} \nonumber\\
&=\frac{1}{\pi} \sum_{j=1}^{\infty} (2j+n)(-1)^j P_j^{(n-1,0)}(\cos(2d_{\textrm{FS}}(z,{D})))e^{-4j(j+n)t}.
\end{align}
As described above, the eigenfunctions are appropriately scaled so that
$\psi_j(\overline{D})=\overline{\psi_j( D)}$.
Now, multiply \eqref{int over D} by $e^{-s^{2}t}$ and then integrate with respect to $t$ for $t>0$.  After interchanging the sum and the integral over $t$,
we deduce that
\begin{align*}
\int\limits_0^{\infty}\int\limits_{\mathcal D}&\left( \sum_{\lambda_j>0}\psi_j(z)\overline{\psi_j(w)}e^{-\lambda_j t}\right)\mu_{\mathcal D}(w) e^{-s^2t} dt=\frac{1}{\pi} \sum_{j=1}^{\infty} \frac{(2j+n)(-1)^j}{s^2+4j(j+n)} P_j^{(n-1,0)}(\cos(2 d_{\textrm{FS}}(z, {D}))).
\end{align*}
The absolute convergence of each series in the above discussion is confirmed using the bound \eqref{poly_bound2}.
With all this, the proof of \eqref{green in terms of sum} is complete.
\end{proof}

By combining Proposition \ref{prop:Thm4 improved} and Proposition \ref{prop:greens function evaluation} we arrive at the identity
\begin{align*}
\frac{1}{\pi} \sum_{j=1}^{\infty} \frac{(2j+n)(-1)^j}{s^2+4j(j+n)} &P_j^{(n-1,0)}(\cos(2d_{\textrm{FS}}(z, {D})))=- \frac{1}{4\pi}\log \vert P_{D}(z)\vert^{2} \\&+ \frac{1}{4\pi}\left(\log||D||^2+\rho(z) -H_n\right) + O(s)
\,\,\,\,\,\textrm{\rm as $s \rightarrow 0$ with $\Re(s^2)>0$.}
\end{align*}
The series of the left-hand side of the above equation is a holomorphic function of $s$ in the region $\Re(s^2)>0$ which can be analytically continued to $s=0$, since the integral on the left-hand side of the equation \eqref{green in terms of sum} is analytic at $s=0$. The uniqueness of Taylor series representation of analytic function  implies the identity
\begin{equation} \label{starting identity}
\log \vert P_{D}(z)\vert^{2}=\log||D||^2+\rho(z) -H_n - \sum_{j=1}^{\infty} \frac{(2j+n)(-1)^j}{j(j+n)} P_j^{(n-1,0)}(\cos(2d_{\textrm{FS}}(z, {D}))).
\end{equation}
The bound \eqref{poly_bound2} immediately implies that
the series in \eqref{starting identity} uniformly and absolutely for $w\in \mathcal D$ and $z$ in compact subsets of $\CC\PP^{n}\setminus (\mathcal D\cup \{D\})$.

The next proposition considers the sum of Jacobi polynomials.

\begin{prop}\label{prop_Legendra_formulas} With the notation as above, for $r=d_{\textrm{FS}}(z,{D})\neq0$ and any positive integer $\ell$, one
has that
\begin{align*}
\sum_{j=1}^{\infty} \frac{(2j+\ell)(-1)^j}{j(j+\ell)} P_j^{(\ell-1,0)}(\cos(2r))+H_\ell&=-\frac{d}{d\nu}P_{\nu}(\cos(2r))\Big|_{\nu=0}\\&=-\frac{d}{d\nu}F(-\nu,\nu+1;1;1-\cos^2 r)\Big|_{\nu=0}\\
&=-\int\limits_0^\infty \left(\pi Y_1(x)+ \frac{2}{x}J_0(x)\right)J_0(x\sin r)dx,
\end{align*}
where $P_{\nu}(x)$ is the Legendre function, $F(-\nu,\nu+1;1;1-\cos^2 r)$ is the classical hypergeometric function and $Y_1$ and $J_0$ are the Bessel functions.
\end{prop}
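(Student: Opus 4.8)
The plan is to prove the chain of equalities from the middle outward, pinning down their common value $-\log(\cos^2 r)$ along the way. The middle equality is the quickest: it is the classical hypergeometric representation of the Legendre function, $P_\nu(x)=F\!\left(-\nu,\nu+1;1;\tfrac{1-x}{2}\right)$, combined with the observation that $\tfrac{1-\cos 2r}{2}=\sin^2 r=1-\cos^2 r$; since differentiation in $\nu$ commutes with this substitution of argument, the two $\nu$-derivatives agree. To identify the common value I would differentiate the hypergeometric series in $\nu$ at $\nu=0$. Writing $F(-\nu,\nu+1;1;y)=\sum_{k\geq 0}\frac{(-\nu)_k(\nu+1)_k}{(k!)^2}y^k$ and using that the Pochhammer symbol $(-\nu)_k$ vanishes at $\nu=0$ for $k\geq 1$ with $\frac{d}{d\nu}(-\nu)_k\big|_{\nu=0}=-(k-1)!$, while $(\nu+1)_k\big|_{\nu=0}=k!$, each term contributes $-y^k/k$. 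Hence $\frac{d}{d\nu}F(-\nu,\nu+1;1;y)\big|_{\nu=0}=\log(1-y)$, and with $y=1-\cos^2 r$ the right-hand side equals $-\log(\cos^2 r)$.

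For the first equality I would expand the function $-\log(\cos^2 r)=-\log\frac{1+x}{2}$, with $x=\cos 2r\in(-1,1)$, in the orthogonal system $\{P_j^{(\ell-1,0)}\}$ on $[-1,1]$ with weight $(1-x)^{\ell-1}$. The coefficient computation is the technical core but is routine via the Rodrigues formula \eqref{Jacobi_poly}: substituting it and integrating by parts $j$ times, the boundary terms vanish for $j\geq 1$ (the factor $(1-x)^{\ell-1+j}(1+x)^j$ and its first $j-1$ derivatives vanish at $x=\pm 1$), and $\frac{d^j}{dx^j}\log\frac{1+x}{2}=(-1)^{j-1}(j-1)!(1+x)^{-j}$ reduces the integral to $\int_{-1}^1(1-x)^{\ell-1+j}\,dx=2^{\ell+j}/(\ell+j)$. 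Dividing by the known squared norm $h_j=2^\ell/(2j+\ell)$ yields the coefficient $(-1)^{j-1}(2j+\ell)/(j(j+\ell))$ for $j\geq 1$; the $j=0$ coefficient is handled by a Beta-integral, producing $\frac{\Gamma'}{\Gamma}(1)-\frac{\Gamma'}{\Gamma}(\ell+1)=-H_\ell$. Reassembling gives $-\log(\cos^2 r)=H_\ell+\sum_{j\geq 1}\frac{(-1)^j(2j+\ell)}{j(j+\ell)}P_j^{(\ell-1,0)}(\cos 2r)$, which is precisely the first equality. Convergence of this Fourier--Jacobi series at interior points $r\in(0,\pi/2)$, together with the termwise justification of the integrations by parts against the integrable logarithmic singularity at $x=-1$, follows from the uniform bound \eqref{poly_bound2}, which makes the summands $O(j^{-3/2})$. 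I would also note that the $\ell=n$ case is consistent with \eqref{starting identity} and the Fubini--Study identity $\cos^2 r=|P_D(z)|^2/(\Vert z\Vert^2\Vert D\Vert^2)$, providing an independent check.

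The remaining, and I expect hardest, step is the third equality, namely the Bessel-integral representation $-\log(\cos^2 r)=-\int_0^\infty\bigl(\pi Y_1(x)+\tfrac{2}{x}J_0(x)\bigr)J_0(x\sin r)\,dx$. The first point to settle is that the integral even makes sense: as $x\to 0$ one has $\pi Y_1(x)\sim -2/x$, which exactly cancels $\tfrac{2}{x}J_0(x)\sim 2/x$, so the integrand is regular at the origin, while as $x\to\infty$ each Bessel factor decays like $x^{-1/2}$, leaving only conditional convergence that must be controlled through the oscillation. I would establish the identity either by differentiating in $\nu$ at $\nu=0$ a Hankel/Mehler--Sonine integral representation of $P_\nu(\cos 2r)$, in which $J_0(x\sin r)$ is the natural kernel, or by evaluating the regularized integral directly from the Weber--Schafheitlin formulas in \cite{GR07} and \cite{Wa66}; the logarithm should emerge from a Frullani-type Bessel evaluation of the shape $\int_0^\infty\frac{J_0(ax)-J_0(bx)}{x}\,dx=\log(b/a)$.

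The main obstacle throughout is taming this conditionally convergent Bessel integral and extracting the precise logarithm, whereas the first two equalities are comparatively mechanical once the hypergeometric representation and the Rodrigues orthogonality computation are in hand.
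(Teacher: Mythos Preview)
Your argument is correct in substance (modulo a harmless sign slip in the intermediate coefficient; the reassembled formula you state is right), but the route differs noticeably from the paper's. For the first equality the paper never computes a Fourier--Jacobi expansion: instead it uses the contiguity relation $P_j^{(\ell-1,0)}=P_j^{(\ell,0)}-P_{j-1}^{(\ell,0)}$ (formula 8.961.8 in \cite{GR07}) to show that the left-hand side is independent of $\ell$, reducing to $\ell=1$; the Legendre sum is then identified via the tabulated expansion $\sum_k(-1)^k\bigl(\tfrac{1}{\nu-k}-\tfrac{1}{\nu+k+1}\bigr)P_k(x)=\pi P_\nu(x)/\sin(\pi\nu)$ and a limit $\nu\to 0$. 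Your Rodrigues/integration-by-parts computation is more self-contained and, as a bonus, pins down the common value explicitly as $-\log(\cos^2 r)$, which the paper leaves implicit; on the other hand you must justify pointwise convergence of the Jacobi series, a step the paper's reduction sidesteps. For the Bessel identity your plan is vaguer than it needs to be: the paper's path is short and avoids any Frullani-type manoeuvre. One writes $P_\nu(\cos 2r)=F(-\nu,\nu+1;1;\sin^2 r)=\int_0^\infty J_{2\nu+1}(x)J_0(x\sin r)\,dx$ via the Weber--Schafheitlin formula 6.512.1 of \cite{GR07}, and then differentiates the index of $J_{2\nu+1}$ at $\nu=0$ using the closed form for $\partial_\nu J_\nu$ at an integer (formula 8.486(1).6 of \cite{GR07}), which produces exactly $\pi Y_1(x)+\tfrac{2}{x}J_0(x)$; I would recommend adopting this in place of the Frullani sketch.
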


\begin{proof}
By starting with the formula 8.961.8. from \cite{GR07} we see that for any $\ell \geq 2$ and $x\in [-1,1)$
\begin{align*}
\sum_{j=1}^{\infty}(-1)^j\frac{2j+\ell}{j(j+\ell)}P_j^{(\ell-1,0)}(x)&=\sum_{j=1}^{\infty}\frac{(-1)^j}{j}P_j^{(\ell,0)}(x)- \sum_{j=1}^{\infty}\frac{(-1)^j}{(j+\ell)}P_{j-1}^{(\ell,0)}(x)\\&=\sum_{j=1}^{\infty}(-1)^j\frac{2j+\ell+1}{j(j+\ell+1)}P_j^{(\ell,0)}(x) + \frac{1}{\ell+1}.
\end{align*}
By replacing $\ell$ with $\ell-1$, we get that
\begin{equation}\label{change_of_ell}
\sum_{j=1}^{\infty}(-1)^j\frac{2j+\ell}{j(j+\ell)}P_j^{(\ell-1,0)}(x)=\sum_{j=1}^{\infty}(-1)^j\frac{2j+\ell-1}{j(j+\ell-1)}P_j^{(\ell-2,0)}(x)-\frac{1}{\ell}.
\end{equation}
Let us now repeat this process $\ell-1$ times, after which we get for any positive integer $\ell$ and $r\neq 0$ the formula
\begin{equation} \label{first sum}
\sum_{j=1}^{\infty}(-1)^j\frac{2j+\ell}{j(j+\ell)}P_j^{(\ell-1,0)}(\cos(2r))+H_\ell=\sum_{j=1}^{\infty}(-1)^j\frac{2j+1}{j(j+1)}P_j(\cos(2r))+1,
\end{equation}
where $P_n$ denotes the Legendre polynomial.

From formula 8.793 of \cite{GR07} we have, for any $\nu \notin \mathbb{Z}$ and $x \in (-1,1]$, the expression
\begin{equation}\label{Pseries}
\sum\limits_{k=0}^{\infty}(-1)^{k}\left(\frac{1}{\nu-k}-\frac{1}{k+\nu+1}\right)P_{k}(x) = \frac{\pi P_{\nu}(x)}{\sin(\nu x)}.
\end{equation}
From (\ref{Pseries}), we can write
$$
\sum\limits_{k=1}^{\infty}(-1)^{k}\left(\frac{1}{\nu-k}-\frac{1}{k+\nu+1}\right)P_{k}(x) = \frac{\pi P_{\nu}(x)}{\sin(\nu x)} - \left(\frac{1}{\nu} - \frac{1}{\nu+1}\right).
$$
By letting $\nu$ approach zero, one obtains
$$
\sum\limits_{k=1}^{\infty}(-1)^{k}\left(\frac{1}{-k}-\frac{1}{k+1}\right)P_{k}(x) = \frac{d}{d\nu}P_{\nu}(x)\Big|_{\nu=0}+1,
$$
or
$$
\sum\limits_{k=1}^{\infty}(-1)^{k}\left(\frac{1}{k}+\frac{1}{k+1}\right)P_{k}(x) = -1-\frac{d}{d\nu}P_{\nu}(x)\Big|_{\nu=0}.
$$
By letting $x= \cos(2r)$, where $r=d_{\textrm{FS}}(z,{D})$, and combining the last equation with \eqref{first sum} we arrive at the identity
$$
\sum_{j=1}^{\infty}(-1)^j\frac{2j+\ell}{j(j+\ell)}P_j^{(\ell-1,0)}(\cos(2r))+H_\ell= -\frac{d}{d\nu}P_{\nu}(\cos(2r))\Big|_{\nu=0},
$$
which holds for any $\ell\geq 1$ and $r\neq 0$.  With this, we have proved the first equality which was claimed in the statement of the Proposition.

For convenience, let us recall that $r = \textrm{d}_{\textrm{FS}}(z,{D})$ and $\cos^{2}(r) = \frac{1}{2}(\cos(2r)+1).$ By
applying formula 8.751.1 with $m=0$ from \cite{GR07} we obtain that
$$
P_{\nu}(\cos(2r))= P_{\nu}(2\cos^2 r -1)=F(-\nu,\nu+1;1;1-\cos^2 r).
$$
Further, if we employ formula 6.512.1 from \cite{GR07} (where, in their notation, we take $\nu=0$, $\mu$ to be equal to (our) $2\nu+1$, $a=1$ and $b=\sin r$) we get that
$$
F(-\nu,\nu+1;1;1-\cos^2 r)= F(-\nu,\nu+1;1;\sin^2r)=\int\limits_0^\infty J_{2\nu+1}(x)J_0(x\sin r)dx.
$$
Finally, upon differentiating with respect to $\nu$ and applying the formula 8.486(1), part 6, of \cite{GR07} with $n=1$, the proof is complete.
\end{proof}

The following corollary summarizes different representations of the $\log$-norm of a linear polynomial
which are obtained by combining Proposition \ref{prop_Legendra_formulas} with \eqref{starting identity}.

\begin{cor} Assuming the notation as above and $z\notin \mathcal D\cup\{D\}$, one has that
\begin{equation}\label{starting formula2}
 2\log | P_{D}(z)|=\rho(z)+\log||D||^2 + \frac{d}{d\nu}F(-\nu,\nu+1;1;1-\cos^2 r)\Big|_{\nu=0}.
\end{equation}
Additionally, for any integer $\ell \geq 1$, one has that
\begin{equation} \label{starting identity1}
2\log | P_{D}(z)|=\rho(z)+\log||D||^2 -H_\ell - \sum_{j=1}^{\infty} \frac{(2j+\ell)(-1)^j}{j(j+\ell)} P_j^{(\ell-1,0)}(\cos(2r)).
\end{equation}
Finally, one also has that
\begin{equation}\label{starting formula4}
2\log | P_{D}(z)|=\rho(z)+\log||D||^2+\int\limits_0^\infty \left(\pi Y_1(x)+ \frac{2}{x}J_0(x)\right)J_0(x\sin r)dx.
\end{equation}
\end{cor}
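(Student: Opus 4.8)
The plan is to obtain all three displayed identities from the single relation \eqref{starting identity} by substituting, in turn, the three equal closed forms supplied by Proposition \ref{prop_Legendra_formulas}. The conceptual observation driving the argument is that Proposition \ref{prop_Legendra_formulas} shows the quantity
\[
H_\ell + \sum_{j=1}^{\infty} \frac{(2j+\ell)(-1)^j}{j(j+\ell)} P_j^{(\ell-1,0)}(\cos(2r))
\]
to be \emph{independent} of the positive integer $\ell$, since each of its evaluations --- the Legendre derivative $-\tfrac{d}{d\nu}P_\nu(\cos(2r))|_{\nu=0}$, the hypergeometric derivative, and the Bessel integral --- carries no dependence on $\ell$. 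This $\ell$-independence is exactly the mechanism that converts the fixed exponent $n-1$ appearing in \eqref{starting identity} into the free parameter $\ell-1$ of \eqref{starting identity1}.

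First I would rewrite \eqref{starting identity}, which is precisely the instance $\ell = n$ and holds for $z \notin \mathcal D \cup \{D\}$, by grouping the harmonic number together with the Jacobi sum:
\[
\log|P_D(z)|^2 = \rho(z) + \log\|D\|^2 - \left( H_n + \sum_{j=1}^{\infty} \frac{(2j+n)(-1)^j}{j(j+n)} P_j^{(n-1,0)}(\cos(2r)) \right).
\]
By Proposition \ref{prop_Legendra_formulas} applied with $\ell = n$, the parenthesized term equals $-\tfrac{d}{d\nu}P_\nu(\cos(2r))|_{\nu=0}$; applying the same proposition with an arbitrary $\ell \geq 1$ shows the parenthesized term also equals $H_\ell + \sum_j \tfrac{(2j+\ell)(-1)^j}{j(j+\ell)} P_j^{(\ell-1,0)}(\cos(2r))$. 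Substituting this back and using $\log|P_D(z)|^2 = 2\log|P_D(z)|$ yields \eqref{starting identity1} for every positive integer $\ell$. The remaining identities are then immediate: inserting the middle equality of Proposition \ref{prop_Legendra_formulas} for the parenthesized term produces the hypergeometric form \eqref{starting formula2}, while inserting the final Bessel-integral equality produces \eqref{starting formula4}.

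I do not anticipate a substantive obstacle, as the corollary is a bookkeeping consequence of results already in hand. The only point requiring a moment of care is the matching of hypotheses: I would note that excluding the point $D$ guarantees $r = d_{\textrm{FS}}(z,D) \neq 0$, which is the standing requirement of Proposition \ref{prop_Legendra_formulas} and ensures the absolute convergence recorded after \eqref{starting identity}, while excluding $\mathcal D$ guarantees $P_D(z) \neq 0$ so that $\log|P_D(z)|$ is finite. Thus the domain $z \notin \mathcal D \cup \{D\}$ is exactly the range on which every expression in the statement is simultaneously valid.
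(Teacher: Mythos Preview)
Your proposal is correct and follows precisely the approach indicated in the paper, which states only that the corollary is obtained by combining Proposition~\ref{prop_Legendra_formulas} with \eqref{starting identity}. Your observation that the $\ell$-independence of $H_\ell + \sum_j \tfrac{(2j+\ell)(-1)^j}{j(j+\ell)}P_j^{(\ell-1,0)}(\cos(2r))$ is the mechanism allowing passage from the fixed $\ell=n$ in \eqref{starting identity} to arbitrary $\ell\geq 1$ in \eqref{starting identity1}, together with your verification that the hypothesis $z\notin\mathcal D\cup\{D\}$ matches the requirements of both ingredients, makes the argument explicit where the paper leaves it implicit.
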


The identities \eqref{starting formula2} and \eqref{starting identity1} will serve as a starting point for the computation
of the equivalent expressions for the  Mahler measure of the linear polynomial $P_D(z)$.  Actually, we will not use
equation \eqref{starting formula4} and present the formula only for possible future interest.

\section{A change of variables formula}

Let $S$ denote the domain of integration in \eqref{Mahler_definition1}, meaning that
$S$ is the subset in the affine chart $\mathcal{Z}_{0} \neq 0$  of $\CC\PP^{n}$ consisting of $n$-tuples
of affine coordinates $(z_1,...,z_n)$ such that
$$
(z_1,...,z_n) =(e^{i\theta_1},\ldots,e^{i\theta_n})
\,\,\,\,\,
\textrm{\rm with}
\,\,\,\,\,
(\theta_1,\ldots,\theta_n) \in [0,2\pi]^n.
$$
We assume that $S$ is equipped with the measure
$$
\mu_S(z)= \frac{1}{(2\pi i)^n}\frac{dz_1}{z_1} \cdots \frac{dz_n}{z_n}
= \frac{1}{(2\pi)^{n}}d\theta_{1}\cdots d\theta_{n}.
$$
The following discussion is based on the material from pages 419-422 of \cite{Wa66} which is
summarized here for the convenience of the reader.

Let $h\in L^{1}([0,1])$, meaning $h(x)$ is an absolutely integrable function for $x \in [0,1]$.
We will view $x$ as a function of $z, D\in \CC\PP^{n}$ by
$$
x =x(z,D) := (\cos(d_{\textrm{FS}}(z,{D})))^{2}=\cos^2 r,
$$
in the notation of the previous section. Consider the integral
\begin{equation*}\label{main ineq_gen}
I(D;h)=\int\limits_{S}h(x(z,D))\mu_S(z).
\end{equation*}
Let us write $D=(\mathcal{W}_0,\ldots,\mathcal{W}_n)=(r_0 e^{i\varphi_0}, \ldots, r_n e^{i\varphi_n})$.  Set
$\mathcal{Z}_{0}=1$ and $\mathcal{Z}_{m} = e^{i\theta_{m}}$ for each integer $m$ from $1$ to $n$ to be a choice of coordinates for $z\in S$.
With this, set
\begin{equation*}\label{dot_prod}
\mathcal{X}:=\sum_{m=0}^{n} \mathcal{Z}_m \overline{{\mathcal{W}}}_m=  \sum_{m=0}^{n} r_m e^{i(\theta_m -\varphi_m)}.
\end{equation*}
Following the discussion on pages 419-422 of \cite{Wa66}, we can view $\mathcal{X}$
as the endpoint of an $(n+1)-$step random walk in two dimensions.  Step number $m$ is of length $r_{m}$, and
the walk occurs in the direction with angle $(\theta_m -\varphi_m)\in [-\pi,\pi]$.  The directions are viewed as independent
and identically distributed random variables, and the probability distribution of each is uniform on the interval $[-\pi,\pi]$.
Let
$$
d(D) = \vert \mathcal{W}_{0} \vert + \cdots + \vert{\mathcal{W}_{n}}\vert
$$
be the $L^{1}$ norm of $D$.  Let $\mathcal{Y}$ be the random variable which is the distance of $\mathcal{X}$ to the
origin.  Observe that for $z\in S$ we can write $x$ as
$$
x=(\cos(d_{\mathrm{FS}}(z,{D})))^{2}=\frac{1}{c(D)^2}\left| \sum_{m=0}^{n} \mathcal{Z}_m \overline{{\mathcal{W}}}_m \right|^2
= \frac{\mathcal{Y}^{2}}{c(D)^{2}}
$$
where $c(D)^{2} = (n+1)\Vert D \Vert^{2}$.
It is proved on page 420 of \cite{Wa66} that for any $u\in[0,d(D)]$ the cumulative
distribution $F_D(u)$ of $\mathcal{Y}$ is given by
$$
\textrm{\rm Prob}(\mathcal{Y} \leq u) = F_D(u)=u\int\limits_{0}^{\infty}J_1(ut)\prod_{m=0}^{n}J_0(r_m t) dt.
$$
Of course, $F_D(u)=0$ for $u<0$ and $F_D(u)=1$ for $u>d(D)$, and $J_0$ and $J_1$ are the classical $J$-Bessel functions.
The probability density function $f_{D}(u)$ of $\mathcal{Y}$ is obtained by differentiating $F_{D}(u)$ with respect to $u$.
Using formula 8.472.1 of \cite{GR07}, we deduce that for $u\in [0,d(D)]$
the function $f_{D}(u)$ is given by
$$
f_D(u)=\int\limits_{0}^{\infty}ut J_0(ut)\prod_{m=0}^{n}J_0(r_m t) dt;
$$
also, $f_{D}(u)$ is equal to zero for $u\notin [0,d(D)]$. When $r_{m}=1$ for all $m$, the
above formula is a classical result of Kluyver \cite{Kl05}; see also formula (2.1) of \cite{BSWZ12}.

With all this,  we can re-write the integral $I(D;h)$ as
\begin{align}\label{main ineq_gen2}\nonumber
I(D;h)&=\int\limits_{S}h(x(z,D))\mu_S(z)
= \int\limits_{0}^{d(D)} h\left(u^{2}/c(D)^{2}\right)f_{D}(u)du
\\&= \int\limits_{0}^{d(D)} h\left(u^{2}/c(D)^{2}\right)\left(\int\limits_{0}^{\infty}ut J_0(ut)\prod_{m=0}^{n}J_0(r_m t) dt\right)du.
\end{align}
Finally, if we let $v=u/c(D)$, we arrive at a general change of variables formula, namely that
\begin{equation}\label{change_of_variables}
\int\limits_{S}h(x(z,D))\mu_S(z)
= c(D)^{2}\int\limits_{0}^{d(D)/c(D)} h(v^{2})\left(\int\limits_{0}^{\infty}vt J_0(c(D)vt)\prod_{m=0}^{n}J_0(r_m t) dt\right)dv.
\end{equation}

Recall that the Cauchy-Schwarz inequality implies that $d(D) \leq c(D)$, so then the above stated assumption
on the function $h$ indeed is sufficient for the above identity to hold.  Indeed, only in the case
when $D$ is a multiple of $(1,\cdots, 1)$ we have $d(D)=c(D)$, otherwise $d(D) < c(D)$.  This well-known
aspect of the Cauchy-Schwarz inequality will be important when we apply \eqref{change_of_variables} to prove
our results.

\section{Proof of the main results}

\subsection{Proof of Theorem \ref{thm:main}}

We begin with equation \eqref{starting formula2} and integrate along $S$ with respect to the measure $\mu_S(z)$. Recall that we derived \eqref{starting formula2} under the condition that $z\notin \mathcal{D}\cup \{ D \}$ where $\mathcal D$ is the divisor of the polynomial $P_D$ on $\CC\PP^n$
The left-hand-side of the resulting formula is $2m(P_{D})$, so it remains to compute the integral of the right-hand side.
Recall that $r=d_{\mathrm{FS}}(z,{D})$, in the notation of Section \ref{sec:log norm} and $x=x(z,D)=\cos ^2 r$, in the notation of the previous section.

 When $(\mathcal D\cup \{  D\}) \cap S = \emptyset$, there is an $\epsilon > 0$ such that for all $z \in S$ we have the bound $d_{\mathrm{FS}}(z,{D}) \geq \epsilon > 0$.
Hence, the hypergeometric function $F(-\nu,\nu+1;1;1-\cos^2 r)$ converges absolutely and uniformly in $\nu$ since $f$ is positive and bounded away from $0$.
Therefore, we may differentiate the series expansion for the hypergeometric function $F(-\nu,\nu+1;1;1-\cos^2 r)$ term by term to get that
\begin{equation}\label{hyper_series}
\frac{d}{d\nu}F(-\nu,\nu+1;1;1-\cos^2 r)\Big|_{\nu=0}= -\sum_{j=1}^{\infty}\frac{1}{j}(1-\cos^2 r)^j.
\end{equation}
Each term in the series \eqref{hyper_series} is non-negative, so the monotone convergence theorem applies to
give that
\begin{equation}\label{mahler_hyper_formula}
2m(P_{D})=2 \log c(D) - \int\limits_S \left(\sum_{j=1}^{\infty}\frac{1}{j}(1-\cos^2 r)^j \right)\mu_S(z).
\end{equation}
Suppose that $(\mathcal D\cup \{ D \}) \cap S \neq \emptyset$.  Choose an $\epsilon > 0$ and set
$$
S_{\epsilon}:= \{z \in S : d_{\mathrm{FS}}(z,w) \geq \epsilon \,\,\textrm{\rm for all} \,\, w \in  \mathcal D \cup \{D \}\}.
$$
By preceding as above, we get arrive at the formula that
\begin{equation}\label{mahler_hyper_formula_epsilon}
2 m(P_{D};S_{\epsilon}) =2 \frac{\textrm{\rm vol}(S_{\varepsilon})}{(2\pi)^{n}}\log c(D)
 - \int\limits_{S_{\epsilon}} \left(\sum_{j=1}^{\infty}\frac{1}{j}(1-\cos^2 r)^j \right)\mu_S(z).
\end{equation}
where
$$
m(P_{D};S_{\epsilon}) := \frac{1}{(2\pi)^{n}}\int\limits_{S_{\epsilon}}
\log  \left| P_D(1,e^{{i\theta _{1}}},e^{{i\theta _{2}}},\ldots ,e^{{i\theta _{n}}})\right| d\theta _{1}\,d\theta _{2}\cdots d\theta _{n}
$$
If $(\mathcal D \cup \{ D \})\cap S \neq \emptyset$, then $(\mathcal D \cup \{ D \}) \cap S$ has $\mu_S$ measure zero, as also noted on page 270 of \cite{De97}.  Hence,
the function $\log\vert P_D\vert$ lies in $L^{1}(\mu_S)$.  Therefore, by letting
$\epsilon$ approach zero, we have, by the monotone convergence theorem, that \eqref{mahler_hyper_formula_epsilon}
becomes \eqref{mahler_hyper_formula}.
In other words, in both cases when $(\mathcal D \cup \{ D \})\cap S = \emptyset$ and when $(\mathcal D\cup\{ D \})\cap S \neq \emptyset$, we arrive at
\eqref{mahler_hyper_formula}, which we now study.

The series on the right-hand side of \eqref{mahler_hyper_formula} is a series of non-negative functions; hence, we can interchange
the sum and the integral to get, for any integer $N\geq 1$
\begin{equation}\label{hypergeom integral}
2m(P_{D}) -2 \log c(D) +\sum_{j=1}^{N}\frac{1}{j} \int\limits_S (1-\cos^2 r)^j\mu_S(z) =-\sum_{j=N+1}^{\infty}\frac{1}{j} \int\limits_S (1-\cos^2 r)^j\mu_S(z).
\end{equation}
In order to complete the proof of Theorem \ref{thm:main} we will evaluate the integral over $S$ and show that the right-hand side of \eqref{hypergeom integral}
is dominated by $2\Gamma(3/4)G(n,D)/(3N^{3/4})$ for a certain constant $G(n,D)$ which depends solely on $n$ and $D$.
We will now do so, and the formula for $G(n,D)$ is given in \eqref{G(n,D) defin} below.

As before, $D$ is identified with $(w_1,...,w_n)=(\mathcal{W}_0,...,\mathcal{W}_n)\in\CC\PP^n$ (recall that the affine chart is chosen so that $\mathcal{W}_0\neq 0$). For $z \in S$, we have that
$$
x^k(z,D)=(\cos r)^{2k}= \frac{\left|1+ \sum_{\ell=1}^n \overline{w}_\ell e^{i\theta_\ell} \right|^{2k}}{(1+n)^k \left(1+ \sum_{\ell=1}^n |w_\ell|^2\right)^k },
$$
so then
\begin{equation}\label{integral of f to k}
\frac{1}{(2\pi)^n} \int\limits_{0}^{2\pi}\cdots \int\limits_{0}^{2\pi} x^k(z,D)d\theta_1 \cdots d\theta_n= \frac{a_1(n,k,D)}{(1+n)^k \left(1+ \sum_{\ell=1}^n |w_\ell|^2\right)^k} ,
\end{equation}
where $a_1(n,k,D)$ denotes the constant term in the expression
$$
x^k(z,D)=
\left|1+ \sum_{\ell=1}^n \overline{w}_\ell e^{i\theta_\ell} \right|^{2k} =\left(1+ \sum_{\ell=1}^n
w_\ell e^{-i\theta_\ell}\right)^k \left(1+ \sum_{\ell=1}^n \overline{w}_\ell e^{i\theta_\ell}\right)^k.$$
The multinomial theorem implies that
\begin{align*}
a_1(n,k,D) &= \sum_{\ell_0+\ldots+\ell_n=k, \ell_m\geq 0, m=1,\ldots n} \binom{k}{\ell_0,\ell_1,\ldots,\ell_n}^2|w_1|^{2\ell_1}\cdots|w_n|^{2\ell_n}
= \frac{a(n,k,D)}{|\mathcal{W}_0|^{2k}}.
\end{align*}
Therefore,
$$
\int\limits_S(1-x(z,D))^j\mu_S(z)= \sum_{k=0}^{j} \binom{j}{k}(-1)^k\frac{a(n,k,D)}{c(D)^{2k}}.
$$
Inserting this into \eqref{hypergeom integral} we get
\begin{equation}\label{pre-final f-la}
\left|2m(P_{D}) -2 \log c(D) +\sum_{j=1}^{N}\frac{1}{j} \sum_{k=0}^{j} \binom{j}{k}\frac{(-1)^ka(n,k,D)}{c(D)^{2k}} \right| \leq \sum_{j=N+1}^{\infty}\frac{1}{j} \int\limits_S (1-x(z,D))^j\mu_S(z).
\end{equation}
To complete the proof of the theorem, it remains to deduce a uniform bound for the series in \eqref{pre-final f-la}.

Let us apply the change of variables formula \eqref{change_of_variables} and write, for any $j \geq 1$,
\begin{equation}\label{hyper_tail}
\int\limits_S(1-x(z,D))^j\mu_S(z) =
c(D)^{2}
\int\limits_{0}^{d(D)/c(D)} (1-v^2)^{j}\left(\int\limits_{0}^{\infty}vt J_0(c(D)vt)\prod_{m=0}^{n}J_0(r_m t) dt\right)dv.
\end{equation}
Since $d(D)/c(D)\leq 1$, we have that $v\leq 1$, hence $\max \{1,\frac{\pi}{2}c(D)vt\}\geq v \max \{1,\frac{\pi}{2}c(D)t\}$. By using the bound \eqref{J0 basic ineq}, we get that
$$
|J_0(c(D)vt)| \leq v^{-1/2} \max \{1,\frac{\pi}{2}c(D)t\}^{-1/2}.
$$
Therefore,
\begin{align*}
\int\limits_{0}^{d(D)/c(D)} (1-v^2)^{j}v |J_0(c(D)vt)|dv
&\leq \left(\int\limits_{0}^{1}(1-v^2)^{j}v^{1/2}dv\right)\max\{1,\frac{\pi}{2} c(D)t\}^{-1/2}\\&= \frac{1}{2}\left(\int\limits_{0}^{1}(1-u)^{j}u^{-1/4}du\right)\max\{1,\frac{\pi}{2} c(D)t\}^{-1/2}
\\&= \frac{\Gamma(j+1)\Gamma(3/4)}{2\Gamma(j+1+3/4)}\max \{1,\frac{\pi}{2}c(D)t\}^{-1/2}\\&\leq \frac{\Gamma(3/4)}{2j^{3/4}} \max \{1,\frac{\pi}{2}c(D)t\}^{-1/2},
\end{align*}
where we have applied \cite{GR07}, formula 3.196.3 with $a=0$, $b=1$, $\mu=3/4$ and $\nu=j+1$ in order to evaluate the integral with respect to $u$.
Trivially, $\sum_{j=N+1}^{\infty} j^{-7/4} \leq \frac{4}{3} N^{-3/4}$. Hence, after multiplying by $1/j$ and summing over $j\geq N+1$ in \eqref{hyper_tail},
we arrive at the bound
$$
\sum_{j=N+1}^{\infty}\frac{1}{j}\int\limits_S(1-x(z,D))^j\mu_S(z)
\leq \frac{2\Gamma(3/4)}{3} \frac{G(n,D)}{N^{3/4}}
$$
where
\begin{equation}\label{G(n,D) defin}
G(n,D)= c(D)^{2} \int\limits_{0}^{\infty} t\left(\max \{1,\frac{\pi}{2}c(D)t\}\right)^{-\tfrac{1}{2}}
\prod_{m=0}^{n} |J_0(r_m t) | dt.
\end{equation}
By combining with \eqref{pre-final f-la} we obtain the inequality
$$
\left|2m(P_D)-2E_1(N;n,D)\right|\leq \frac{2\Gamma(3/4)}{3} \frac{G(n,D)}{N^{3/4}},
$$
where $E_1(N;n,D)$ is defined by \eqref{defn E_1}.  We have now completed the
proof of the first inequality in \eqref{upper bounds}.  When dividing by $2$ and
letting $N \rightarrow \infty$, we also have completed the proof of Theorem \ref{thm:main}.\qed

As a concluding comment, let us point out a further refinement which will yield an
elementary bound for $G(n,D)$.
By using \eqref{J0 basic ineq}, we arrive at the inequality
\begin{equation}\label{G_bound_Bessel}
G(n,D)\leq c(D)^{2} \int\limits_{0}^{\infty} t\left(\max \{1,\frac{\pi}{2}c(D)t\}\right)^{-\tfrac{1}{2}}
\prod_{m=0}^{n} \left(\max \{1,\frac{\pi}{2}r_{m}t\}\right)^{-\tfrac{1}{2}}dt.
\end{equation}
Without loss of generality, assume that $r_0\leq\ldots\leq r_n$, so then $r_{n}\leq c(D)$.
If $n \geq 3$, the integral is convergent since the integral is $O(t^{-n/2})$ for $t > 2/(\pi r_{0})$.
In order to evaluate \eqref{G_bound_Bessel}, we can write the domain of integration as
$$
\int\limits_0^{\infty}= \int\limits_0^{2/(\pi c(D))} + \int\limits_{2/(\pi c(D))}^{2/(\pi r_n)} +
\cdots + \int\limits^{2/(\pi r_j)}_{2/(\pi r_{j+1})}+ \cdots + \int\limits^{\infty}_{2/(\pi r_{0})}.
$$
All integrals are elementary, so then the evaluation of each integral will yield an explicit
and elementary upper bound for $G(n,D)$.  In the case when
not all $r_{j}$'s are distinct, some of these integrals are zero, and the exponents for each integrand
need to take into account the multiplicity of $r_{j}$ in set of components of $\mathcal D$.  
The
computations are elementary.

\subsection{Proof of Theorem \ref{thm:main2} when $\ell =1$}

Choose any integer $N \geq 1$.  when $\ell=1$ we can write \eqref{starting identity1} for $z\notin (\mathcal D\cup \{ D \})$ as
\begin{align}\label{starting formula3with N}\nonumber
2\log | P_{D}(z)|=\rho(z)+&\log||D||^2 - 1-\sum_{j=1}^{N}(-1)^j\frac{2j+1}{j(j+1)}P_j(\cos(2r))
\\& -\sum_{j=N+1}^{\infty}(-1)^j\frac{2j+1}{j(j+1)}P_j(\cos(2 r)),
\end{align}
where, as before $r=d_{\mathrm{FS}}(z,{D})$.  We now will utilize the following three points:
The identity $\cos(2r)=2\cos^2 r -1$; formula 8.962.1 from \cite{GR07} for the Jacobi polynomial with $\alpha=\beta=0$; and that hypergeometric function $F(j+\ell,-j;1;\cos^2 r)$ at these values is the finite sum.  In doing so, we arrive at the formula
$$
P_j^{(\ell-1,0)}(2\cos^2 r-1)=(-1)^j\sum_{k=0}^j \binom{j+\ell+k-1}{k}\binom{j}{k}(-1)^k(\cos r)^{2k},
$$
which holds for all $z\in S$ when $(\mathcal D \cup\{ D \})\cap S = \emptyset$.  For now, we will assume $(\mathcal D \cup \{ D \})\cap S = \emptyset$.
By integrating this equation with respect to $S$ and employing \eqref{integral of f to k} we get
\begin{equation}\label{integral of P l-1}
(-1)^j\int\limits_S P_j^{(\ell-1,0)}(\cos(2r))\mu_S(z)=  \sum_{k=0}^j \binom{j+\ell+k-1}{k}\binom{j}{k}(-1)^k \frac{a(n,k,D)}{c(D)^{2k}}.
\end{equation}
By integrating \eqref{starting formula3with N} with respect to $S$ and applying \eqref{integral of P l-1} with $\ell=1$ we arrive at
\begin{equation}\label{First bound}
\left|m(P_D)-E_2(N;n,D)\right| \leq \frac{1}{2} \sum_{j=N+1}^{\infty}\frac{2j+1}{j(j+1)}\int\limits_{S}\left|P_j(\cos(2d_{\mathrm{FS}}(z, {D})))\right|\mu_S(z),
\end{equation}
where $E_2(N;n,D)$ is defined by \eqref{defn E_2}.

Before studying the right-hand-side of \eqref{First bound}, let us address the setting when $(\mathcal D \cup \{ D \})\cap S \neq \emptyset$. Indeed, the extension of \eqref{First bound} to the case when $(\mathcal D \cup \{ D \})\cap S \neq \emptyset$ follows the method of proof of \eqref{mahler_hyper_formula} in that case.  By integrating over $S_{\epsilon}$ rather than all of $S$, one gets an analogue of \eqref{First bound} for $\epsilon >0$.  At that point, one lets $\epsilon$ approach zero.  The function $\log \vert P_{D}\vert$
and each Jacobi polynomial $P_{j}^{(\ell-1,0)}$ is in $L^{1}(S)$, so one obtains the left-hand-side of \eqref{First bound} as $\epsilon$ approaches zero.
As for the right-hand-side of \eqref{First bound}, one uses the monotone convergence theorem, as in the proof of
\eqref{hypergeom integral}.  With this, one shows that \eqref{First bound} also holds when $(\mathcal D\cup\{ D \})\cap S \neq \emptyset$.

It is left to study the series on the right-hand side of \eqref{First bound}.  We will use inequality \eqref{poly_bound1}
with $\cos(2r)=\cos(2d_{\mathrm{FS}}(z, {D}))$ instead of $x$.
Recall the notation $x(z,D) = \cos^2 (d_{\mathrm{FS}}(z, {D}))$, so then the
inequality \eqref{poly_bound1} gives that
\begin{equation}\label{P_formula}
\int\limits_{S}\left|P_j(\cos(2d_{\mathrm{FS}}(z, {D})))\right|\mu_S(z)
\leq \frac{2}{\sqrt{\pi}\sqrt[4]{2}}\frac{1}{\sqrt{2j+1}}\int\limits_{S}\left(x(z,D)(1-x(z,D))\right)^{-1/4}\mu_S(z).
\end{equation}
Therefore, the right-hand-side of \eqref{First bound} can be bounded from above by
$$
\frac{1}{\sqrt{\pi}\sqrt[4]{2}}\left(\sum_{j=N+1}^{\infty}\frac{\sqrt{2j+1}}{j(j+1)}\right)
\int\limits_{S}\left(x(z,D)(1-x(z,D))\right)^{-1/4}\mu_S(z).
$$
The goal is to make all bounds effective and explicit, so we shall.  Trivially, we have that
$$
\sum_{j=N+1}^{\infty}\frac{\sqrt{2j+1}}{j(j+1)}
\leq \sum_{j=N+1}^{\infty}\frac{2j^{-1/2}}{j^{2}} \leq \frac{4}{\sqrt{N}},
$$
which, when combined with \eqref{First bound}, yields the inequality
\begin{equation}\label{Second_bound}
\left|m(P_D)-E_2(N;n,D) \right| \leq
\frac{C}{\sqrt{N}}\int\limits_{S}\left(x(z,D)(1-x(z,D))\right)^{-1/4}\mu_S(z)
\,\,\,
\textrm{\rm for}
\,\,\,
C = \frac{4}{\sqrt{\pi}\sqrt[4]{2}}.
\end{equation}
The integral in \eqref{Second_bound} can be re-written using the change of variables formula
\eqref{change_of_variables}.  In doing so, it becomes
\begin{equation}\label{Upper_bound_integral}
H(n,D) = c(D)^{2}\int\limits_{0}^{d(D)/c(D)} v^{-1/2}(1-v^{2})^{-1/4}
\left(\int\limits_{0}^{\infty}vt  J_0(c(D)vt) \prod_{m=0}^{n} J_0(r_m t)  dt\right)dv.
\end{equation}
The Fubini-Tonelli theorem then implies that
\begin{equation}\label{I(D) bound}
H(n,D)\leq c(D)^{2} \int\limits_{0}^{\infty} t \prod_{m=0}^{n}\vert J_0(r_m t)\vert\cdot\left( \int\limits_{0}^{1} v^{\tfrac{1}{2}}\left(1-v^2\right)^{-\tfrac{1}{4}}
\vert J_0(c(D)tv)\vert dv\right) dt.
\end{equation}
The Cauchy-Schwarz inequality together with the elementary inequality \eqref{J0 basic ineq} for the $J$-Bessel function gives the inequality
$$
\int\limits_{0}^{1} v^{\tfrac{1}{2}}\left(1-v^2\right)^{-\tfrac{1}{4}}\vert J_0(c(D)tv)\vert dv
\leq \sqrt{\frac{\pi}{2}}\left(\int\limits_{0}^{1} v J_0^2(c(D)tu)dv\right)^{\tfrac{1}{2}}\leq
\sqrt{\frac{\pi}{2}}\left(\max \{1,\frac{\pi}{2}c(D)t\}\right)^{-\tfrac{1}{2}}.
$$
Finally, by substituting this inequality into \eqref{I(D) bound}, we arrive at the bound
\begin{equation*}\label{I(D) bound new}
H(n,D)\leq c(D)^{2} \sqrt{\frac{\pi}{2}}\int\limits_{0}^{\infty} t\left(\max \{1,\frac{\pi}{2}c(D)t\}\right)^{-\tfrac{1}{2}}
\prod_{m=0}^{n} |J_0(r_m t)| dt= \sqrt{\frac{\pi}{2}} G(n,D).
\end{equation*}
Therefore,
$$
\left|m(P_D)-E_2(N;n,D) \right| \leq \frac{2 \sqrt[4]{2}}{\sqrt{N}}G(n,D),
$$
which proves the second inequality in \eqref{upper bounds}.  Formula \eqref{measure formula 3} follows by letting $N$
tend to infinity. \qed

\subsection{Proof of equation \eqref{measure formula 4}}

Equation \eqref{measure formula 4} follows by a direct manipulation of the inner sum appearing in \eqref{measure formula 3}.
To ease the notation, we will set $b(n,k,D)=(-1)^k a(n,k,D)/c(D)^{2k}$.  Then, we can write
\begin{align*}
\sum_{j=1}^{\infty}\frac{2j+1}{j(j+1)}&\sum_{k=0}^j\binom{j+k}{k}\binom{j}{k} b(n,k,D)= 1-\frac{2}{n+1}
\\&+\sum_{j=2}^{\infty}\frac{1}{j}\left(\sum_{k=0}^j\binom{j+k}{k}\binom{j}{k} b(n,k,D)+\sum_{k=0}^{j-1}\binom{j+k-1}{k}\binom{j-1}{k} b(n,k,D)\right).
\end{align*}
It is elementary to prove that
\begin{align*}
\sum_{k=0}^j\binom{j+k}{k}\binom{j}{k} b(n,k,D)&+\sum_{k=0}^{j-1}\binom{j+k-1}{k}\binom{j-1}{k} b(n,k,D)\\&=
2\sum_{k=0}^j\binom{j+k-1}{k}\binom{j}{k} b(n,k,D),
\end{align*}
which completes the proof of equation \eqref{measure formula 4}. \qed

\subsection{Proof of Theorem \ref{thm:main2} when $\ell\geq 2$}

Assume $\ell \geq 2$ and $D\neq r(1,1,\ldots,1)$ for some $r\neq 0$. Proceeding as above, we integrate
\eqref{starting identity1} along $S$ with respect to $\mu_S(z)$ and employ \eqref{integral of P l-1} to arrive at
\begin{multline}\label{measure for ell >1}
\left|m(P_D)-\log c(D)-\frac{H_\ell}{2}-\frac{1}{2}\sum_{j=1}^{N}
\frac{2j+\ell}{j(j+\ell)}\sum_{k=0}^j\binom{j+\ell+k-1}{k}\binom{j}{k} \frac{(-1)^k a(n,k,D)}{c(D)^{2k}} \right| \\
\leq \frac{1}{2}\sum_{j=N+1}^{\infty}
\frac{2j+\ell}{j(j+\ell)}\int\limits_S \left|P_j^{(\ell-1,0)}(\cos(2d_{\mathrm{FS}}(z, {D})))\right|\mu_S(z).
\end{multline}
In the notation as above, set
\begin{equation}\label{limit_r_s2}
H(j;\ell,D):= \int\limits_{S}\left|P_j^{(\ell-1,0)}(\cos(2d_{\mathrm{FS}}(z, {D})))\right|\mu_S(z)=  \int\limits_{S}\left|P_j^{(\ell-1,0)}(2 x(z,D)-1)\right|\mu_S(z).
\end{equation}
Equation \eqref{change_of_variables} applies, so then
$$
H(j;\ell,D) = c(D)^{2}\int\limits_{0}^{d(D)/c(D)}\left| P^{(\ell-1,0)}_{N}(2v^{2}-1)\right|\left(\int\limits_{0}^{\infty}vt J_0(c(D)vt)\prod_{m=0}^{n}J_0(r_m t) dt\right)dv.
$$
We now apply the bound in \eqref{poly_bound2} with $x=2v^2-1 \in [-1,2(d(D)/c(D))^2-1]\subset[-1,1)$ to get
$$
(1-v^2)^{1/4}v^{1/2}\left(1-v^2\right)^{(\ell-1)/2}|P^{(\ell-1,0)}_N(2v^2-1)|\leq \frac{6\sqrt{2}}{\sqrt{2j+\ell}}.
$$
Recall that $d(D)$ is the $L^{1}$ norm of $D$, and $c(D)^{2} = (n+1)\Vert D \Vert^{2}$.  By the Cauchy-Schwarz inequality,
$d(D)/c(D) \leq 1$ with equality if and only if $D = r(1,1,\cdots, 1)$ for some non-zero $r$. Since
we assume that $D \neq r(1,1,\cdots, 1)$, it follows that $d(D)/c(D)<1$, hence
$$
|P^{(\ell-1,0)}_N(2v^2-1)| \leq \frac{ A(D,\ell)}{ \sqrt{2j+\ell}} v^{-1/2}(1-v^{2})^{-1/4},
$$
where
$$
A(D,\ell) = 6\sqrt{2} \left(1-\frac{d(D)^2}{c(D)^2}\right)^{-(\ell-1)/2}.
$$
Therefore
$$
H(j;\ell,D)\leq\frac{ c(D)^{2}A(D,\ell)}{\sqrt{2j+\ell}}\int\limits_{0}^{\infty} t \prod_{m=0}^{n}\vert J_0(r_m t)\vert\cdot\left( \int\limits_{0}^{1} v^{\tfrac{1}{2}}\left(1-v^2\right)^{-\tfrac{1}{4}}
\vert J_0(c(D)tv)\vert dv\right) dt
$$
The integral on the right-hand side of the above equation is the same as the integral which appears in \eqref{I(D) bound}.
So then the argument following \eqref{I(D) bound} applies and gives the inequality
$$
H(j;\ell,D)\leq\sqrt{\frac{\pi}{2}}\frac{A(D,\ell)}{\sqrt{2j+\ell}} G(n,D).
$$
Consequently, we have shown that
$$
\sum_{j=N+1}^{\infty}
\frac{2j+\ell}{j(j+\ell)}\int\limits_S \left|P_j^{(\ell-1,0)}(\cos(2\textrm{\rm dist}_{\mathrm{FS}}(z, {D})))\right|\mu_S(z)\leq \sqrt{\frac{\pi}{2}}A(D,\ell)G(n,D)\sum_{j=N+1}^{\infty}
\frac{ \sqrt{2j+\ell}}{j(j+\ell)}.
$$
This proves that the right-hand side of \eqref{measure for ell >1} is bounded by $\sqrt{\pi}A(D,\ell)G(n,D)N^{-1/2}$.

With this, the proof of Theorem \ref{thm:main2} when $\ell\geq 2$ follows upon letting $N\to\infty$ in \eqref{measure for ell >1}. \qed

\section{Concluding remarks}

\subsection{Proof of Corollary \ref{cor. comparison}}
Let us rewrite a result from \cite{R-VTV04} in our notation. Specifically, equation (4.7) from \cite{R-VTV04} becomes the formula that
\begin{equation}\label{comp. fla 2}
2m(P_D)= \log||D||^2-\gamma-\sum_{m=1}^{\infty}\frac{1}{m}\sum_{\ell=0}^m \binom{m}{\ell}\frac{(-1)^{\ell} a(n,\ell,D)}{l!||D||^{2l}}.
\end{equation}
Trivially,
$$
\sum_{\ell=0}^1 \binom{1}{\ell}\frac{(-1)^{\ell} a(n,\ell,D)}{l!||D||^{2l}}=0,
$$
from which \eqref{comparison equation} follows immediately by comparing \eqref{main expression} with \eqref{comp. fla 2}. \qed

\subsection{Additional formulas for Mahler measures}

For any $n\geq 3$ choose any $D$ and one of the formulas we have proved, say Theorem \ref{thm:main}.  For any
integer $\tilde{n}>n$, let $\widetilde{D}$ be the vector of coefficients whose first $n$ components is $D$ and whose
last $\tilde{n}-n$ coordinates are zero.  The normalization in \eqref{Mahler_definition} is such that
$m(P_{D}) = m(P_{\widetilde{D}})$.  Also, for any $k$ we have that $a(\tilde{n},k,\widetilde{D}) = a(n,k,D)$.  However,
$$
c(\widetilde{D})^2 = (\tilde{n}+1)\Vert \widetilde{D}\Vert^{2} =
(\tilde{n}+1)\Vert D\Vert^{2} = \frac{\tilde{n}+1}{n+1}c(D)^2.
$$
Let us set $m=\tilde{n}-n$.  With this, the main formula in Theorem \ref{thm:main} becomes the
statement that for any $m \geq 0$, one has that
\begin{align} \label{comb identities}
m(P_D)= \log c(D) + \frac{1}{2}\log\left(\frac{n+m+1}{n+1}\right)- \frac{1}{2} \sum_{j=1}^{\infty}\frac{1}{j}\sum_{k=0}^j\binom{j}{k} \frac{(-1)^k a(n,k,D)(n+1)^{2k}}{c(D)^{2k}(n+m+1)^{2k}}.
\end{align}
Similar identities can be proved by the by the same means from Theorem \ref{thm:main2}.

Equation \eqref{comb identities} with $m=0$ and $m\geq 1$ yields the following curious combinatorial identity, similar to \eqref{comparison equation}
\begin{align} \label{comb identities2}
\log\left(\frac{n+m+1}{n+1}\right)= \sum_{j=1}^{\infty}\frac{1}{j}\sum_{k=0}^j\binom{j}{k} \frac{(-1)^k a(n,k,D)}{c(D)^{2k}}\left[\left(\frac{n+1}{n+m+1}\right)^{2k}-1\right],
\end{align}
which holds true for any $m\geq 1$.

Note that all estimates we have derived for $G(n,D)$ grow for fixed $D$ as $n$ increase.  As such, the
above considerations do not seem to aid with convergence issues when applying our results to numerical estimations.

\subsection{The excluded cases when $n=2$ and $D=r(1,\cdots, 1)$}

In the case $n=2$, it may be possible to revisit our computations and obtain bounds.  For example, rather than using
\eqref{J0 basic ineq}, one could use the asymptotic formula
$$
J_{0}(x) = \sqrt{\frac{2}{\pi x}}\cos(z-\pi/4) + O(x^{-3/2})
\,\,\,\,\,
\textrm{\rm as $x \rightarrow \infty$.}
$$
The oscillatory bounds may be such that one an derive a finite bound for $G(2,D)$
and possibly improved bounds for $G(n,D)$ for $n \geq 3$ as well.  These considerations are
undertaken in \cite{AMS20}.

The exclusion of the point $D=r(1,\cdots, 1)$ in Theorem \ref{thm:main2} comes from the problem
of deriving bounds for the $P^{(\ell-1,0)}_{j}(r)$ near $r=1$; see \eqref{measure for ell >1}.
For this, one can seek to employ Hilb-type formulas; see page 197 of \cite{Sz74}, page 6 of
\cite{BZ07} or page 980 of \cite{FW85}.  We will leave these questions for
future consideration.

\subsection{Other choices for $S$}

We shall now describe how the approach taken in this paper can be generalized.  In doing so, we
will be somewhat vague in our discussion.

For this section let $S$ be a ``nice'' set in $\CC\PP^n$ with a ``nice'' measure $\mu_{S}$. One example is the product of
circles in an affine chart of $\CC\PP^n$ and $\mu_{S}$ is the translation invariant metric on each circle.
Let us define
$$
m_{S}(F_{\mathcal{D}}) = \int\limits_{S}\log \Vert F_{\mathcal{D}}\Vert^{2}_{\mu}(z) \mu_{S}(z),
$$
where $F_{\mathcal{D}}$ is a holomorphic form on $\CC\PP^n$ with divisor $\mathcal{D}$. The invariant
$m_{\CC\PP^n}(F_{\mathcal{D}})$ is obtained by integrating with respect to the Fubini-Study metric.
By using the spectral expansion of the Green's function and the proof of Proposition \ref{prop:Thm4 improved}
we obtain a general formula, namely
\begin{equation}\label{M_measure_general}
4\pi \textrm{\rm vol}_{\mu}(\CC\PP^n)\sum\limits_{\lambda_{j}> 0}\frac{1}{\lambda_{j}}
\left(\int\limits_{S}\psi_{j} \mu_{S}\right)\left(\int\limits_{\mathcal{D}}\overline{\psi}_{j} \mu_{\mathcal{D}}
\right) = \textrm{\rm vol}_{\mu}(\CC\PP^n) m_{S}(F_{\mathcal{D}}) - \textrm{\rm vol}_\mu(S) m_{\CC\PP^n}(F_{\mathcal{D}}).
\end{equation}
In the above calculations, we were able to express the series
$$
\sum\limits_{\lambda_{j}> 0}\frac{1}{\lambda_{j}}
\left(\int\limits_{S}\psi_{j} \mu_{S}\right)\left(\int\limits_{\mathcal D}\overline{\psi}_{j} \mu_{\mathcal D}
\right)
$$
as a sum of integrals of Jacobi polynomials by using the Radon transform.  When this method
applies, one then obtains a series of Legendre or Jacobi polynomials at various arguments which
are then integrated over $S$.  It certainly seems plausible that our approach will apply in
other settings.

In particular, let us note that in \cite{LM18} the authors considered the case when $S$ is
a product of circles with different radii.  It seems as if the methodology developed in this article
will apply to give analogues of  Theorem \ref{thm:main}, Theorem \ref{thm:main2} and Theorem \ref{thm:series}
in that setting.

\subsection{Estimates for canonical heights} \label{subsect: canonical heights}

The contents of this section are based on comments from an anonymous referee; we gratefully acknowledge
them for sharing their mathematical insight.

First, let us rephrase our study in the context of Arakelov theory.
Following the work in \cite{Ma00}, the calculation of Mahler measures is manifest within the study
of arithmetic intersection theory.  As such, one is naturally led to determine suitable Green's currents associated
to a divisor $\mathcal D$.  There are two immediate possibilities.  First, if $P_{D}$ is a polynomial whose divisor
is $\mathcal D$, then the function
$$
g_{\mathcal D}(z) = - \log (\Vert P_{D}(z)\Vert^{2}_{\textrm{\rm FS}})
$$
is one such Green's function.  In this expression, we have used $z = (\mathcal{Z}_0,\ldots,\mathcal{Z}_n)$
and the subscript ``FS'' to denote the Fubini-Study metric. Second, from above, one also can consider the function
$$
\tilde{g}_{\mathcal D}(z)=
\int\limits_{\mathcal D}G_{\CC\PP^n}(z,w)\mu_{\mathcal D}(w).
$$
The difference $g_{\mathcal D}(z) - \tilde{g}_{\mathcal D}(z)$ admits a smooth extension across the divisor
$\mathcal D$, from which one can prove that
$$
\textrm{\rm d}_{z}\textrm{\rm d}_{z}^{c}\left(g_{\mathcal D}(z) - \tilde{g}_{\mathcal D}(z)\right) = 0.
$$
Therefore, there is a constant $B(D,n)$, which depends on the coefficients $D$ of $P_{D}$ and
the dimension $n$, such that
$$
g_{\mathcal D}(z) - \tilde{g}_{\mathcal D}(z) = B(D,n).
$$
At this point, we have arrived at the second displayed line in the proof of Proposition \ref{prop:Thm4 improved}.
Our subsequent analysis addresses the details of establishing normalizations and evaluation of
the constant $B(D,n)$, as well as the study of $\tilde{g}_{\mathcal D}(z)$ which we undertake via analytic
continuation through the generalization of Kronecker limit formula from \cite{CJS20}.

Second, we now have an opportunity to restate the contents of Theorem \ref{thm:series} as follows.
The bounds in \eqref{upper bounds} provide estimates for the Mahler measure in terms of either
$E_{1}(N;n,D)$ or $E_{2}(N;n,D)$.  Furthermore, the constant $G(n,D)$ can be explicitly computed when,
for example, one combines \eqref{G(n,D) defin} and \eqref{J0 basic ineq}.  As such, Theorem
\ref{thm:series} provides a means by which one can effectively and efficiently estimate the canonical
height which was computed on page 107 of \cite{Ma00}.

\subsection{Reinterpreting Mahler measures}

The results in the present paper follow from the Kronecker-type limit formula derived
in \cite{CJS20}.  The setting of \cite{CJS20} was that of a general K\"ahler manifold $X$
and $\mathcal D$ is a divisor which is smooth up to codimension two.  In this article we took $X$
to be $\CC\PP^{n}$ and $\mathcal D$ to be a hyperplane.  From this initial point, we then delved into
detailed computations and identities involving the Legendre polynomials, Jacobi polynomials
and $J$-Bessel functions.  However, the foray into special function theory was expected.
After all, in many instances one knows that heat kernels can be expressed in terms of
spherical functions, and the Green's function can be computed from the heat kernel; see page 436
of \cite{CJS20} and references therein.  Additionally, Jacobi polynomials and Jacobi functions
are known to be present in such aspects of harmonic analysis; see, for example, \cite{Ko84}.

We find it quite interesting that \eqref{M_measure_general} can be viewed
in the setting of harmonic analysis, and possibly beyond.

\medskip

\noindent
James W. Cogdell \\
Department of Mathematics \\
Ohio State University \\
231 W. 18th Ave. \\
Columbus, OH 43210 \\
U.S.A. \\
e-mail: cogdell@math.ohio-state.edu

\vspace{5mm}\noindent
Jay Jorgenson \\
Department of Mathematics \\
The City College of New York \\
Convent Avenue at 138th Street \\
New York, NY 10031\\
U.S.A. \\
e-mail: jjorgenson@mindspring.com

\vspace{5mm}\noindent
Lejla Smajlovi\'c \\
Department of Mathematics \\
University of Sarajevo\\
Zmaja od Bosne 35, 71 000 Sarajevo\\
Bosnia and Herzegovina\\
e-mail: lejlas@pmf.unsa.ba

\end{document}